
\documentclass[11pt]{article}
\usepackage{amssymb}
\usepackage{latexsym}
\usepackage{amsmath}
\usepackage{amsthm}
\usepackage{amsfonts}
\usepackage{color}
\usepackage{pdfsync}
\usepackage[usenames,dvipsnames]{pstricks}
\usepackage{epsfig}

\newcommand{\R}{\mathbb{R}}







\numberwithin{equation}{section}

\usepackage{amssymb}

\newcommand{\beq}{\begin{equation} }
\newcommand{\eqq}{\end{equation} }
\newcommand{\cuad}{{\sqcap\kern-.68em\sqcup}}

\newtheorem{lemma}{Lemma}[section]

\newtheorem{remark}{Remark}[section]
\newcommand{\bremark}{\begin{remark} \em}
\newcommand{\eremark}{\end{remark} }

\def\beeq{\begin{equation}}
\def\eeq{\end{equation}}
\newcommand{\begeqaet}{\begin{eqnarray*}}
\newcommand{\eneqaet}{\end{eqnarray*}}

\hyphenation{in-ho-mo-ge-neous}



\let\Section=\section
\def\section{\setcounter{equation}{0}\Section}

\newtheorem{Thm}{Theorem}[section]

\newtheorem{Prop}{Proposition}[section]
\newtheorem{Remark}{Remark}[section]


\begin{document}
\begin{center}{\bf\Large Concentration of ground state solution for a fractional Hamiltonian Systems }\medskip

\bigskip

\bigskip

{C\'esar E. Torres Ledesma}

Departamento de Matem\'aticas, \\
Universidad Nacional de Trujillo,\\
Av. Juan Pablo II s/n. Trujillo-Per\'u\\
{\sl  ctl\_576@yahoo.es}

\noindent

{Ziheng Zhang}

Department of Mathematics,\\ 
Tianjin Polytechnic University,\\ 
Tianjin 300387, China.

{\sl zhzh@mail.bnu.edu.cn}


\medskip

\medskip
\medskip
\medskip
\medskip

\end{center}

\centerline{\bf Abstract}

\medskip

In this paper we are concerned with the existence of ground states solutions for the following fractional Hamiltonian systems 
$$
\left\{
  \begin{array}{ll}
   - _tD^{\alpha}_{\infty}(_{-\infty}D^{\alpha}_{t}u(t))-\lambda L(t)u(t)+\nabla W(t,u(t))=0,\\[0.1cm]
    u\in H^{\alpha}(\mathbb{R},\mathbb{R}^n),
  \end{array}
\right.
  \eqno(\mbox{FHS})_\lambda
$$
where $\alpha\in (1/2,1)$, $t\in \mathbb{R}$, $u\in \mathbb{R}^n$, $\lambda>0$ is a parameter, $L\in C(\mathbb{R},\mathbb{R}^{n^2})$ is a symmetric matrix
for all $t\in \mathbb{R}$, $W\in C^1(\mathbb{R} \times \mathbb{R}^n,\mathbb{R})$ and $\nabla W(t,u)$ is the gradient of $W(t,u)$ at $u$. Assuming that $L(t)$ is a positive semi-definite symmetric matrix for all $t\in \mathbb{R}$, that is, $L(t)\equiv 0$ is allowed to occur in some finite interval $T$ of $\mathbb{R}$,
$W(t,u)$ satisfies Ambrosetti-Rabinowitz condition and some other reasonable hypotheses, we show that (FHS)$_\lambda$ has a ground sate solution which vanishes on $\mathbb{R}\setminus T$ as $\lambda \to \infty$, and converges to $u\in H^{\alpha}(\R, \R^n)$, where $u\in E_{0}^{\alpha}$ is a ground state solution of the Dirichlet BVP for fractional systems on the finite interval $T$. Recent results are generalized and significantly improved.

\noindent 
{\bf MSC: } 34C37, 35A15, 35B38.

\medskip

\date{}

\setcounter{equation}{0}
\section{ Introduction}

Fractional differential equations both ordinary and partial ones are applied in mathematical modeling of process in physics,
mechanics, control theory, biochemistry, bioengineering and economics. Therefore, the theory of fractional differential
equations is an area intensively developed during the last decades \cite{ATMS04,Hilfer00}. The monographs
\cite{KST06,MR93,Pod99} enclose a review of methods of solving fractional differential equations, which are an extension of procedures from differential equations theory.

Recently, also equations including both left and right fractional derivatives are discussed.
Apart from their possible applications, equations with left and right derivatives is an interesting and new field in
fractional differential equations theory. In this topic, many results are obtained dealing with the existence and multiplicity of solutions of nonlinear fractional
differential equations by using techniques of nonlinear analysis, such as fixed point theory (including Leray-Schauder
nonlinear alternative) \cite{BaiLu05}, topological degree theory (including co-incidence degree theory) \cite{Jiang11}
and comparison method (including upper and lower solutions and monotone iterative method) \cite{Zhang11} and so on.

It should be noted that critical point theory and variational methods have also turned out to be very effective tools
in determining the existence of solutions for integer order differential equations. The idea behind them is trying to
find solutions of a given boundary value problem by looking for critical points of a suitable energy functional defined
on an appropriate function space. In the last 30 years, the critical point theory has become a wonderful tool in
studying the existence of solutions to differential equations with variational structures, we refer the reader to the
books due to Mawhin and Willem \cite{MawhinW89}, Rabinowitz \cite{Rab86}, Schechter \cite{Schechter99} and the references
listed therein.

In (FHS)$_\lambda$, if $\alpha=1$ and $\lambda=1$, then it reduces to the following second order Hamiltonian systems
$$
\ddot u-L(t) u+\nabla W(t,u)=0.\eqno(\mbox{HS})
$$
It is well known that the existence of homoclinic solutions for Hamiltonian systems and their importance in the study of
the behavior of dynamical systems have been recognized from Poincar\'{e} \cite{Poincare}. They may be ``organizing centers"
for the dynamics in their neighborhood. From their existence one may, under certain conditions, infer the existence of
chaos nearby or the bifurcation behavior of periodic orbits. During the past two decades, with the works of \cite{Omana92} and
\cite{Rab90} variational methods and critical point theory have been successfully applied for the search of the existence
and multiplicity of homoclinic solutions of (HS).

Assuming that $L(t)$ and $W(t,u)$ are independent of $t$ or periodic in $t$, many authors have studied the existence of homoclinic solutions for (HS), see for instance \cite{Co91,Ding95,Rab90} and the references therein and some more
general Hamiltonian systems are considered in the recent papers \cite{Izydorek05,Izydorek07}. In this case, the existence of
homoclinic solutions can be obtained by going to the limit of periodic solutions of approximating problems.
If $L(t)$ and $W(t,u)$ are neither autonomous nor periodic in $t$, the existence of homoclinic solutions of (HS) is quite different from the periodic
systems, because of the lack of compactness of the Sobolev embedding, such as \cite{Ding95,Omana92,Rab91} and the references mentioned there.

Motivated by the above classical works, in \cite{Torres12} the author considered the following fractional Hamiltonian systems
$$
\left\{
  \begin{array}{ll}
    _tD^{\alpha}_{\infty}(_{-\infty}D^{\alpha}_{t}u(t))+L(t)u(t)=\nabla W(t,u(t)),\\[0.1cm]
    u\in H^{\alpha}(\mathbb{R},\mathbb{R}^n),
  \end{array}
\right.
  \eqno(\mbox{FHS})
$$
where $\alpha\in (1/2,1)$, $t\in \mathbb{R}$, $u\in \mathbb{R}^n$, $L\in C(\mathbb{R},\mathbb{R}^{n^2})$ is a symmetric and positive definite matrix
for all $t\in \mathbb{R}$, $W\in C^1(\mathbb{R}\times \mathbb{R}^n,\mathbb{R})$ and $\nabla W(t,u)$ is the gradient of $W(t,u)$ at $u$. Assuming that $L(t)$ and
$W(t,u)$ satisfy the following hypotheses, the author showed that (FHS) possesses at least one nontrivial solution via Mountain
Pass Theorem. Explicitly,
\begin{itemize}
\item[(L)] $L(t)$ is a positive definite symmetric matrix for all $t\in \mathbb{R}$ and there exists an $l\in C(\mathbb{R},(0,\infty))$ such
that $l(t)\rightarrow \infty$ as $|t|\rightarrow \infty$ and
\begin{equation}\label{eqn:L coercive}
(L(t)u,u)\geq l(t)|u|^2 \quad \mbox{for all}\,\, t\in \mathbb{R} \,\, \mbox{and} \,\, u\in \mathbb{R}^n.
\end{equation}
\item[(W$_1$)]$W\in C^1(\mathbb{R} \times \mathbb{R}^n,\mathbb{R})$ and there is a constant $\theta>2$ such that
$$
0<\theta W(t,u)\leq (\nabla W(t,u),u)\quad \mbox{for all}\,\, t\in \mathbb{R} \,\,\mbox{and}\,\, u\in \mathbb{R}^n\backslash\{0\}.
$$
\item[(W$_2$)]$|\nabla W(t,u)|=o(|u|)$ as $|u|\rightarrow 0$ uniformly with respect to $t\in \mathbb{R}$.
\item[(W$_3$)] There exists $\overline{W}\in C(\mathbb{R}^n,\mathbb{R})$ such that
$$
|W(t,u)|+|\nabla W(t,u)|\leq |\overline W(u)|\quad \mbox{for every} \,\, t\in \mathbb{R}\,\, \mbox{and}\,\, u\in \mathbb{R}^n.
$$
\end{itemize}
(W$_1$) is the so-called global Ambrosetti-Rabinowitz condition, which implies that $W(t,u)$ is of superquadratic growth as $|u|\rightarrow \infty$.
Inspired by this work, using the genus properties of critical point theory, in \cite{ZhangYuan} the authors established some new criterion to guarantee
the existence of infinitely many solutions of (FHS) for the case that $W(t,u)$ is subquadratic as $|u|\rightarrow \infty$, where the condition (L) is also
needed to guarantee that the functional corresponding to (FHS) satisfies (PS) condition (see \cite{MendezTorres15} where a similar result was obtained). In addition,
very recently, using the fountain theorem, in \cite{XuReganZhang15}, the authors established the existence of infinitely many solutions of (FHS) for the case that
$W(t,u)$ is superquadratic as $|u|\rightarrow \infty$ without the Ambrosetti-Rabinowitz condition. Moreover, recently in  \cite{Torres14} the author firstly discussed the following
perturbed fractional Hamiltonian systems
$$
\left\{
  \begin{array}{ll}
   - _tD^{\alpha}_{\infty}(_{-\infty}D^{\alpha}_{t}u(t))-L(t)u(t)+\nabla W(t,u(t))=f(t),\\[0.1cm]
    u\in H^{\alpha}(\mathbb{R},\mathbb{R}^n),
  \end{array}
\right.
  \eqno(\mbox{PFHS})
$$
where $\alpha\in (1/2,1)$, $t\in \mathbb{R}$, $u\in \mathbb{R}^n$, $L\in C(\mathbb{R},\mathbb{R}^{n^2})$ is a symmetric and positive definite matrix
for all $t\in \mathbb{R}$, $W\in C^1(\mathbb{R} \times \mathbb{R}^n,\mathbb{R})$ and $\nabla W(t,u)$ is the gradient of $W(t,u)$ at $u$, $f\in C(\mathbb{R}, \mathbb{R}^n)$ and belongs to $L^2(\mathbb{R},\mathbb{R}^n)$.
Under the conditions of (L), (W$_1$)-(W$_3$) and assuming that the $L^2$ norm of $f$ is sufficiently small, he showed that (PFHS) has at least two
nontrivial solutions, which has been generalized in \cite{XuReganZhang15} where the condition (L) is also satisfied.

As is well-known, the condition (L) is the so-called coercive condition and is very restrictive. In fact, for a simple choice like $L(t)=\tau Id_n$, the condition (\ref{eqn:L coercive})
is not satisfied, where $\tau>0$ and $Id_n$ is the $n\times n$ identity matrix. Motivated by this point, in \cite{ZhangYuan14} the authors focused their attentions on the case that
$L(t)$ is bounded in the sense that
\begin{itemize}
\item[(L)$'$] $L\in C(\mathbb{R},\mathbb{R}^{n^2})$ is a symmetric and positive definite matrix for all $t\in \mathbb{R}$ and there are constants $0<\tau_1<\tau_2<\infty$ such that
$$
\tau_1|u|^2\leq (L(t)u,u)\leq \tau_2|u|^2\quad \mbox{for all}\,\, (t,u)\in \mathbb{R} \times \mathbb{R}^n.
$$
\end{itemize}
If the potential $W(t,u)$ is supposed to be subquadratic as $|u|\rightarrow +\infty$, then they also showed that (FHS) possessed infinitely many solutions.

Here we must point out, to obtain the existence or multiplicity of solutions for (FHS) (or (PFHS)), all the papers mentioned above need the assumption that
the symmetric matrix $L(t)$ is positive definite, see (L) and (L)$'$. Inspired by \cite{Torres12,Torres14,ZhangYuan,ZhangYuan14}, in present paper we deal with the following fractional Hamiltonian systems with a parameter
$$
\left\{
  \begin{array}{ll}
   - _tD^{\alpha}_{\infty}(_{-\infty}D^{\alpha}_{t}u(t))-\lambda L(t)u(t)+\nabla W(t,u(t))=0,\\[0.1cm]
    u\in H^{\alpha}(\mathbb{R},\mathbb{R}^n),
  \end{array}
\right.
  \eqno(\mbox{FHS})_{\lambda}
$$
where $\alpha\in (1/2,1)$, $t\in \mathbb{R}$, $u\in \mathbb{R}^n$, $\lambda>0$ is a parameter, $L\in C(\mathbb{R},\mathbb{R}^{n^2})$ is a symmetric matrix
for all $t\in \mathbb{R}$, $W\in C^1(\mathbb{R} \times \mathbb{R}^n,\mathbb{R})$ and $\nabla W(t,u)$ is the gradient of $W(t,u)$ at $u$. Unlike the papers on this problem, we require that
$L(t)$ is a positive semi-definite symmetric matrix for all $t\in \mathbb{R}$, that is, $L(t)\equiv 0$ is allowed to occur in some finite interval $T$ of $\mathbb{R}$. Explicitly,
\begin{itemize}
\item[$(\mathcal{L})_1$]$L\in C(\mathbb{R},\mathbb{R}^{n\times n})$ is a symmetric matrix for all $t\in\mathbb{R}$; there exists a nonnegative continuous function $l(t):\mathbb{R} \rightarrow \mathbb{R}$ and
a constant $c>0$ such that
$$
(L(t)u,u)\geq l(t)|u|^2,
$$
and the set $\{l<c\}:=\{t\in \mathbb{R} \,|\,l(t)<c\}$ is nonempty with $meas \{l<c\}<\frac{1}{C_{\infty}^2}$, where $meas \{\cdot\}$ is the Lebesgue measure and $C_\infty$
is the best Sobolev constant for the embedding of $X^{\alpha}$ into $L^{\infty}(\mathbb{R})$;
\item[$(\mathcal{L})_2$]$J=int (l^{-1}(0))$ is a nonempty finite interval and $\overline{J}=l^{-1}(0)$;
\item[$(\mathcal{L})_3$]there exists an open interval $T\subset J$ such that $L(t)\equiv 0$ for all $t\in \overline{T}$.
\end{itemize}
In this case, we assume that $W \in C^{1}(\mathbb{R} \times \mathbb{R}^n, \mathbb{R}^n)$ satisfy $(W_1)-(W_3)$  and:
\begin{itemize}
\item[$(W_4)$] $s\to \frac{\langle \nabla W(t,sq) , q\rangle}{s^{\theta-1}}$ is strictly increasing for all $q\neq 0$ and $s>0$, $\theta$ is given by ($W_1$). 
\end{itemize}

\begin{Remark}\label{nta1}
We note that, under the assumption of $(W_1)$, there are constants $c_1>0$ and $c_2>0$  such that (see \cite{Torres12}): 
\begin{enumerate}
\item[(i)] $W(t,u) \geq c_1|u|^\theta$, $|u|\geq 1$,
\item[(ii)] $W(t,u) \leq c_2|u|^\theta$, $|u|\leq 1$. 
\end{enumerate}
Furthermore, by ($i$) we obtain that
\begin{equation}\label{eqif}
\lim_{|u|\to \infty} \frac{W(t,u)}{|u|^2} = \infty,\;\;\mbox{uniformly in}\;\;t.
\end{equation}
Since $W(t,q)$ must be replaced by $W(t,q) - W(t,0)$, we may also assume without loss of generality that $W(t,0) = 0$ for all $t$. 
\end{Remark}
Now, we are in the position to state our main result.

\begin{Thm}\label{Thm:MainTheorem1}
Suppose that {\rm ($\mathcal{L}$)$_1$}-{\rm ($\mathcal{L}$)$_3$}, $(W_1) - (W_4)$ are satisfied, then there exists $\Lambda _*>0$ such that for every
$\lambda>\Lambda_*$, {\rm(FHS)$_\lambda$} has a ground state solution.
\end{Thm}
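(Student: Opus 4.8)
The plan is to obtain the ground state as a least-energy critical point of the $C^1$ functional
$$
I_\lambda(u)=\frac12\int_{\mathbb{R}}|{}_{-\infty}D^{\alpha}_{t}u|^2\,dt+\frac{\lambda}{2}\int_{\mathbb{R}}(L(t)u,u)\,dt-\int_{\mathbb{R}}W(t,u)\,dt,
$$
defined on $E_\lambda=\{u\in H^{\alpha}(\mathbb{R},\mathbb{R}^n):\int_{\mathbb{R}}(L(t)u,u)\,dt<\infty\}$ equipped with the $\lambda$-dependent norm $\|u\|_\lambda^2=\int_{\mathbb{R}}|{}_{-\infty}D^{\alpha}_{t}u|^2\,dt+\lambda\int_{\mathbb{R}}(L(t)u,u)\,dt$. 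The first task is to check that, for $\lambda$ large, $\|\cdot\|_\lambda$ dominates the full $H^\alpha$-norm, which is what makes $E_\lambda$ a Hilbert space on which the nonlinear term behaves well. This is exactly where the measure hypothesis in $(\mathcal{L})_1$ enters: splitting $\mathbb{R}=\{l\ge c\}\cup\{l<c\}$ and using $(L(t)u,u)\ge l(t)|u|^2$, on $\{l\ge c\}$ one gets $\int_{\{l\ge c\}}|u|^2\le (\lambda c)^{-1}\|u\|_\lambda^2$, while on $\{l<c\}$ the embedding $X^\alpha\hookrightarrow L^\infty$ gives $\int_{\{l<c\}}|u|^2\le \operatorname{meas}\{l<c\}\,C_\infty^2\|u\|_{H^\alpha}^2$; since $\operatorname{meas}\{l<c\}\,C_\infty^2<1$ the latter fraction can be absorbed, yielding $\|u\|_{H^\alpha}^2\le \frac{1+(\lambda c)^{-1}}{1-\operatorname{meas}\{l<c\}\,C_\infty^2}\|u\|_\lambda^2$. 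With $(W_2)$, $(W_3)$ and this embedding, $I_\lambda\in C^1(E_\lambda,\mathbb{R})$ and its critical points are weak solutions of (FHS)$_\lambda$.

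Next I would verify the mountain pass geometry, uniformly in $\lambda\ge 1$. Near $0$, Remark \ref{nta1}(ii) together with $(W_2)$ give $\int W(t,u)\,dt=o(\|u\|_\lambda^2)$, so there are $\rho,\beta>0$ with $I_\lambda\ge\beta$ on $\|u\|_\lambda=\rho$. For the descent, pick a fixed $e\in E_\lambda\setminus\{0\}$ supported in $T$; by $(\mathcal{L})_3$ one has $L\equiv0$ on $\overline T$, so $\int(Le,e)=0$ and hence $\|se\|_\lambda^2=s^2\int_{\mathbb{R}}|{}_{-\infty}D^\alpha_t e|^2\,dt$ is independent of $\lambda$ along this ray, while the superquadratic growth from Remark \ref{nta1}(i)/\eqref{eqif} forces $I_\lambda(se)\to-\infty$ as $s\to\infty$. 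This produces a mountain pass level $c_\lambda\in[\beta,\max_{s\ge0}I_\lambda(se)]$ bounded above independently of $\lambda$, together with a Palais--Smale sequence at level $c_\lambda$; the Ambrosetti--Rabinowitz condition $(W_1)$ makes every such sequence bounded via the standard estimate $I_\lambda(u)-\frac1\theta\langle I_\lambda'(u),u\rangle\ge(\frac12-\frac1\theta)\|u\|_\lambda^2$.

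The decisive step is compactness, and this is where the degeneracy of $L$ makes the problem genuinely harder than under (L) or (L)$'$: the quadratic form is not coercive on $E_\lambda$ because $L$ vanishes on $T$, so the classical coercive-embedding argument fails. I would instead prove that, for $\lambda$ beyond an explicit threshold $\Lambda_*$, the embedding $E_\lambda\hookrightarrow L^2(\mathbb{R})$ (and $L^q$ for the relevant exponents) is compact. The mechanism is tightness: the bound $\int_{\{l\ge c\}}|u|^2\le(\lambda c)^{-1}\|u\|_\lambda^2$ shows that, for large $\lambda$, almost all of the $L^2$-mass of a bounded sequence is carried on the finite-measure set $\{l<c\}$, where the local compactness of $H^\alpha\hookrightarrow L^2_{loc}$ applies. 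Feeding this into the weak limit of a bounded (PS)$_{c_\lambda}$ sequence, and using $(W_2)$--$(W_3)$ with the $L^\infty$ embedding to pass to the limit in $\int\nabla W(t,u_k)\,dt$, gives strong convergence to a nontrivial critical point. I expect this to be the main obstacle: choosing $\Lambda_*$ so that simultaneously the norm control holds and the tightness is strong enough to recover compactness.

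Finally, to identify a ground state I would work on the Nehari manifold $\mathcal{N}_\lambda=\{u\in E_\lambda\setminus\{0\}:\langle I_\lambda'(u),u\rangle=0\}$. Condition $(W_4)$ is precisely the monotonicity needed here: since $\theta>2$, the strict monotonicity of $s\mapsto \langle\nabla W(t,sq),q\rangle\, s^{1-\theta}$ provided by $(W_4)$ implies the strict monotonicity of $s\mapsto \langle\nabla W(t,sq),sq\rangle\, s^{-2}$, which guarantees that each ray $\{sq:s>0\}$ meets $\mathcal{N}_\lambda$ at a unique point $t_q q$ where $s\mapsto I_\lambda(sq)$ attains its maximum. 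Standard Nehari theory then gives $\inf_{\mathcal{N}_\lambda}I_\lambda=c_\lambda$, and any minimizing sequence for $I_\lambda$ on $\mathcal{N}_\lambda$ is a (PS)$_{c_\lambda}$ sequence; by the compactness established above the infimum is attained at some $u_\lambda\neq0$, which is the sought ground state solution of (FHS)$_\lambda$ for every $\lambda>\Lambda_*$.
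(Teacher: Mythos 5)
Your overall architecture (the $\lambda$-dependent norm comparison with $H^\alpha$ via the splitting $\{l\ge c\}\cup\{l<c\}$, the mountain pass geometry, boundedness of Palais--Smale sequences from $(W_1)$, and the Nehari manifold with the unique projection supplied by $(W_4)$) coincides with the paper's. The gap is in your compactness step: the claim that for $\lambda$ beyond a threshold $\Lambda_*$ the embedding $E_\lambda\hookrightarrow L^2(\mathbb{R})$ is compact is false. For each fixed $\lambda$ the estimate $\int_{\{l\ge c\}}|u|^2\,dt\le(\lambda c)^{-1}\|u\|_{X^{\alpha,\lambda}}^2$ only bounds the mass on $\{l\ge c\}$ by a fixed nonzero multiple of the norm; it does not make that mass vanish along a bounded sequence. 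Since $(\mathcal{L})_1$--$(\mathcal{L})_3$ allow $l\equiv c$ outside a compact set, the translates $u_k(t)=\varphi(t-k)$ of a fixed bump form a bounded sequence in $X^{\alpha,\lambda}$ converging weakly to $0$ with $\|u_k\|_{L^2}$ constant, so no subsequence converges in $L^2$. ``Almost all the mass lives on $\{l<c\}$ for large $\lambda$'' is a statement about the limit $\lambda\to\infty$, not about any fixed $\lambda$, and cannot be upgraded to a compact embedding.

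What actually closes the argument (and what the paper does) is a quantitative comparison carried out on the specific Palais--Smale sequence at level $c_\lambda$. One first shows $c_\lambda\le\mathfrak{C}_0$ uniformly in $\lambda$ by testing with a fixed $\varphi_0\in C_0^\infty(T)$, for which $\lambda\int(L\varphi_0,\varphi_0)\,dt=0$ by $(\mathcal{L})_3$; with $(W_1)$ this yields $\limsup_n\|u_n\|_{X^{\alpha,\lambda}}^2\le\frac{2\theta}{\theta-2}\mathfrak{C}_0$, a bound independent of $\lambda$. If the weak limit were $0$, the Lions dichotomy (Lemma \ref{TMlemCC}) applies: vanishing forces $\|u_n\|_{X^{\alpha,\lambda}}\to0$, contradicting that $\mathcal{N}_\lambda$ is bounded away from $0$ (Lemma \ref{TMlem2}); non-vanishing produces $\beta>0$ and points $t_n$ with $|t_n|\to\infty$ carrying mass at least $\beta$, which must essentially sit on $\{l\ge c\}$ because $meas\{l<c\}<\infty$, and then $\lambda c\beta\le\frac{2\theta}{\theta-2}\mathfrak{C}_0$. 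This inequality, not a compact embedding, is what determines $\Lambda_*$. Finally, strong convergence to the (now nontrivial) weak limit is not automatic either; the paper derives it from the monotonicity of $s\mapsto\frac1\theta(\nabla W(t,su),su)-W(t,su)$, a consequence of $(W_4)$, combined with Fatou's lemma to squeeze $\|u_n\|_{X^{\alpha,\lambda}}\to\|u\|_{X^{\alpha,\lambda}}$. You should replace the compact-embedding claim by this concentration--exclusion argument.
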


\begin{Remark}\label{nta2}
{\rm 
Note that in ($\mathcal{L}$)$_1$-($\mathcal{L}$)$_3$, we assume that $L(t)$ is a positive semi-definite symmetric matrix for all $t\in\mathbb{R}$. Therefore, the hypothesis (L) and (L)$'$
on $L(t)$ are not satisfied. Thus the results in \cite{Torres12,Torres14,ZhangYuan,ZhangYuan14} are generalized and improved significantly.

Moreover, as mentioned above, the coercive condition (L) is used to establish some compact embedding theorems to guarantee that (PS) condition (or the other weak compactness conditions) holds, which is the essential
step to obtain the existence of homoclinic solutions of (FHS) via Mountain Pass Theorem. In present paper, we assume that $L(t)$ satisfies ($\mathcal{L}$)$_1$-($\mathcal{L}$)$_3$
and could not obtain some compact embedding theorem.
Therefore, the main difficulty is to adapt some new technique to overcome this difficulty and test the (PS) condition is verified, see Lemma \ref{TMlem5}.}
\end{Remark}

Here we must mention the recent works \cite{Torres15}, \cite{ZhangTorres}.  In fact, in \cite{Torres15}, assuming that $L(t)$ satisfies ($\mathcal{L}$)$_1$-($\mathcal{L}$)$_3$, then the author
showed that (FHS)$_\lambda$ has at least one nontrivial solution for the case that the potential $W(t,u)$ satisfies the following subquadratic assumptions as
$|u|\rightarrow \infty$:
\begin{itemize}
\item[(W$_5$)]there exist a constant $\gamma\in (1,2)$ and a positive function $b\in L^p (\mathbb{R})$ with $p\in (1,\frac{2}{2-\gamma}]$ such that
$$
|\nabla W(t,u)|\leq b(t)|u|^{\gamma-1}\quad \mbox{for all}\,\, (t,u\in \mathbb{R}\times \mathbb{R}^n);
$$
\item[(W$_6$)]there exist two constants $\eta$, $\delta>0$ such that
$$
|W(t,u)|\geq \eta |u|^\gamma \quad \mbox{for all} \, \, x\in T \,\, \mbox{and} \,\, u\in \mathbb{R}\,\, \mbox{with} \,\, |u|\leq \delta.
$$
\end{itemize}
$|u|\rightarrow +\infty$. Furthermore in \cite{ZhangTorres}, the authors have complemented the previous work by consider superquadratic potential when $|u| \to \infty$. They obtain the same results as in \cite{Torres15}.

For technical reason, we consider that there exists $0<L< +\infty$, such that $T = [0,L]$, where $T$ is given by $(\mathcal{L})_3$. On the concentration of solutions we have the following result.
\begin{Thm}\label{Thm:MainTheorem2}
Let $u_\lambda$ be  a solution of problem $(FHS)_\lambda$ obtained in Theorem \ref{Thm:MainTheorem1}, then $u_\lambda \to \tilde{u}$ strongly in $H^{\alpha}(\mathbb{R})$ as $\lambda \to \infty$, where $\tilde{u}$ is a ground state solution of the equation
\begin{eqnarray}\label{08}
&{_{t}}D_{L}^{\alpha} {_{0}}D_{t}^{\alpha}u  = \nabla W(t, u),\quad t\in (0, L),\\
& u(0) = u(L) = 0.\nonumber
\end{eqnarray}
\end{Thm}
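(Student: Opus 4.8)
The plan is to treat this as a deep-well concentration problem in the spirit of Bartsch--Wang. Write $I_\lambda$ for the energy functional of {\rm(FHS)}$_\lambda$ on $X^\alpha$,
$$I_\lambda(u)=\frac12\int_\R |{}_{-\infty}D^\alpha_t u|^2\,dt+\frac{\lambda}{2}\int_\R (L(t)u,u)\,dt-\int_\R W(t,u)\,dt,$$
set $\|u\|_\lambda^2=\int_\R|{}_{-\infty}D^\alpha_t u|^2+\lambda\int_\R(Lu,u)$, and let $I_0$ be the functional of the limiting problem \eqref{08} on $E_0^\alpha$. Since $u_\lambda$ is a ground state it lies on the Nehari manifold $\mathcal N_\lambda$ with $c_\lambda:=I_\lambda(u_\lambda)=\inf_{\mathcal N_\lambda}I_\lambda$, the projection being well defined thanks to $(W_4)$. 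First I would pin down the levels: extending a ground state $\tilde w\in E_0^\alpha$ of \eqref{08} by zero to $\R$ gives a competitor for which the penalty term vanishes (as $L\equiv0$ on $\overline T$ and $\tilde w=0$ off $T$), so $c_\lambda\le\max_{s>0}I_\lambda(s\tilde w)=\max_{s>0}I_0(s\tilde w)=c_0$ for every $\lambda$. Combining $c_\lambda=I_\lambda(u_\lambda)-\tfrac1\theta I'_\lambda(u_\lambda)u_\lambda$ with $(W_1)$ gives $(\tfrac12-\tfrac1\theta)\|u_\lambda\|_\lambda^2\le c_\lambda\le c_0$, so $\|u_\lambda\|_\lambda$ is bounded uniformly in $\lambda$; the coercivity inequality behind $(\mathcal L)_1$ (used in Lemma \ref{TMlem5}, via $meas\{l<c\}<C_\infty^{-2}$) then bounds $\|u_\lambda\|_{H^\alpha}$ uniformly. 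Passing to a subsequence, $u_\lambda\rightharpoonup\tilde u$ in $H^\alpha(\R)$, with $u_\lambda\to\tilde u$ in $L^2_{loc}$, $L^\infty_{loc}$ and pointwise.

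Next I would identify the limit. From $\lambda\int_\R(Lu_\lambda,u_\lambda)\le\|u_\lambda\|_\lambda^2\le C$ we get $\int_\R l|u_\lambda|^2\le C/\lambda\to0$, so weak lower semicontinuity forces $\int_\R l|\tilde u|^2=0$; hence $\tilde u$ vanishes off $\overline J=\overline T$, and since $H^\alpha(\R)\hookrightarrow C(\R)$ (as $\alpha>1/2$) this yields $\tilde u(0)=\tilde u(L)=0$, i.e. $\tilde u|_T\in E_0^\alpha$. For any $\varphi\in E_0^\alpha$ extended by zero the penalty term again drops out, so testing $I'_\lambda(u_\lambda)\varphi=0$ and letting $\lambda\to\infty$---using the identity ${}_{-\infty}D^\alpha_t={}_0D^\alpha_t$ for functions supported in $[0,L]$, the weak convergence of the fractional derivatives, and the local convergence of $\nabla W(\cdot,u_\lambda)$ guaranteed by $(W_2)$--$(W_3)$---shows that $\tilde u$ is a weak solution of \eqref{08}.

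The heart of the matter is upgrading weak to strong convergence in $H^\alpha(\R)$. The key is a global $L^2$ estimate: splitting $\R=[-R,R]\cup\{|t|>R\}$, the first piece converges by compact embedding, while on the tail I would write $\int_{\{l\ge c\}}|u_\lambda|^2\le c^{-1}\int_\R l|u_\lambda|^2\le C/(c\lambda)$ and control $\int_{\{l<c\}}|u_\lambda|^2\le\|u_\lambda\|_\infty^2\,meas(\{l<c\}\cap\{|t|>R\})$ via the uniform $L^\infty$ bound and the finiteness of $meas\{l<c\}$; choosing $R$ large then $\lambda$ large gives $u_\lambda\to\tilde u$ in $L^2(\R)$, and interpolation with the $L^\infty$ bound yields the $L^p$-convergence needed to pass to the limit in $\int_\R(\nabla W(t,u_\lambda),u_\lambda)$. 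Feeding this into $I'_\lambda(u_\lambda)(u_\lambda-\tilde u)=0$, and using $\int_\R(Lu_\lambda,\tilde u)=0$ together with the fact that $\tilde u$ solves \eqref{08}, a $\liminf$--$\limsup$ sandwich forces simultaneously $\int_\R|{}_{-\infty}D^\alpha_t u_\lambda|^2\to\int_0^L|{}_0D^\alpha_t\tilde u|^2$ and $\lambda\int_\R(Lu_\lambda,u_\lambda)\to0$; with the $L^2$-convergence this gives $u_\lambda\to\tilde u$ strongly in $H^\alpha(\R)$. Finally $\tilde u\neq0$, because $\|u_\lambda\|_\lambda\ge\rho>0$ on $\mathcal N_\lambda$ by $(W_2)$ and this lower bound survives the strong convergence, and then $I_0(\tilde u)\le\liminf c_\lambda\le c_0$ forces $I_0(\tilde u)=c_0$, so $\tilde u$ is a ground state of \eqref{08}.

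The main obstacle I anticipate is precisely the global strong convergence: since $\R$ is noncompact, without the deep-well coercivity the mass of $u_\lambda$ could escape to infinity or leak into $\{l<c\}$. The estimate $\int_\R l|u_\lambda|^2\le C/\lambda$ combined with the smallness $meas\{l<c\}<C_\infty^{-2}$ in $(\mathcal L)_1$ is exactly what prevents this, and making the tail control uniform in $\lambda$ and $R$ is the delicate point; the energy comparison, the passage to the limit in the equation, and the ground-state identification are comparatively routine.
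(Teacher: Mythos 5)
Your proposal is correct and follows the same overall skeleton as the paper's proof: the comparison $c_\lambda\le\tilde c$ via zero--extension of a ground state of (\ref{08}), the uniform bound $(\tfrac12-\tfrac1\theta)\|u_\lambda\|^2_{X^{\alpha,\lambda}}\le c_\lambda\le \mathfrak{C}_0$, the Fatou argument forcing $\int_{\R}l|\tilde u|^2=0$ so that $\tilde u$ is supported in $\overline J$ and solves the limit problem, and the final $\liminf$--$\limsup$ sandwich on $\|u_\lambda\|^2_{X^{\alpha,\lambda}}$ versus $\|\tilde u\|^2_{X^\alpha}$ obtained from $I'_\lambda(u_\lambda)u_\lambda=I'_\lambda(u_\lambda)\tilde u=0$. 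The one step you handle genuinely differently is the global $L^2(\R)$ convergence, which you correctly identify as the crux. The paper proves it by contradiction through the Lions-type Lemma \ref{TMlemCC}: failure of convergence produces intervals $(t_k-R_0,t_k+R_0)$ carrying mass $\delta$ with $|t_k|\to\infty$, whence $\|u_k\|^2_{X^{\alpha,\lambda_k}}\ge\lambda_k c\,\delta+o(1)\to\infty$, contradicting (\ref{esti}). You instead give a direct, quantitative tail estimate, splitting $\{|t|>R\}$ into $\{l\ge c\}$ (bounded by $C/(c\lambda)$ from the penalty term) and $\{l<c\}$ (bounded by the uniform $L^\infty$ bound times $\mathrm{meas}(\{l<c\}\cap\{|t|>R\})\to0$), plus compact embedding on $[-R,R]$. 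Your route is more elementary and avoids the concentration--compactness lemma altogether, at the price of using the uniform $L^\infty$ bound explicitly; the paper's route is shorter once Lemma \ref{TMlemCC} is available. You also add the ground-state identification ($\tilde u\ne0$ from the uniform lower bound on $\mathcal N_\lambda$, then $I_0(\tilde u)=\lim c_\lambda\le\tilde c$ combined with $\tilde u\in\tilde{\mathcal N}$), a step the paper's written proof leaves implicit, so this is a genuine improvement in completeness.
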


\begin{Remark}\label{openquestion}
{\rm
We recall that, Theorem \ref{Thm:MainTheorem1} and Theorem \ref{Thm:MainTheorem2}, give a positive answer to the question formulate in \cite{ZhangTorres}.

\noindent 
For the proof of Theorems \ref{Thm:MainTheorem1} and \ref{Thm:MainTheorem2} we adapt some ideas of \cite{MDLTJWFZ, ASTW, ZhangTorres}.}
\end{Remark}

The remaining part of this paper is organized as follows. Some preliminary results are presented in Section 2. In Section
3, we are devoted to accomplishing the proof of Theorem \ref{Thm:MainTheorem1} and in Section 4 we present the proof of Theorem \ref{Thm:MainTheorem2}.

\section{Preliminary Results}

In this section, for the reader's convenience, firstly we introduce some basic definitions of fractional calculus.
The Liouville-Weyl fractional integrals of order $0<\alpha<1$ are defined as
$$
_{-\infty}I^{\alpha}_x u(x)=\frac{1}{\Gamma(\alpha)}\int^x_{-\infty} (x-\xi)^{\alpha-1}u(\xi)d\xi
$$
and
$$
_{x}I^{\alpha}_{\infty} u(x)=\frac{1}{\Gamma(\alpha)}\int^{\infty}_{x}(\xi-x)^{\alpha-1}u(\xi)d\xi.
$$
The Liouville-Weyl fractional derivative of order $0<\alpha<1$ are defined as the left-inverse operators of the corresponding
Liouville-Weyl fractional integrals
\begin{equation}\label{eqn:RD}
_{-\infty}D^{\alpha}_x u(x)=\frac{d}{dx} {_{-\infty}I^{1-\alpha}_x u(x)}
\end{equation}
and
\begin{equation}\label{eqn:LD}
_{x}D^{\alpha}_{\infty} u(x)=-\frac{d}{dx} {_{x}I^{1-\alpha}_{\infty} u(x)}.
\end{equation}
The definitions of (\ref{eqn:RD}) and (\ref{eqn:LD}) may be written in an alternative form as follows:
$$
_{-\infty}D^{\alpha}_x u(x)=\frac{\alpha}{\Gamma(1-\alpha)}\int^{\infty}_0 \frac{u(x)-u(x-\xi)}{\xi^{\alpha+1}}d\xi
$$
and
$$
_{x}D^{\alpha}_{\infty} u(x)=\frac{\alpha}{\Gamma(1-\alpha)}\int^{\infty}_0 \frac{u(x)-u(x+\xi)}{\xi^{\alpha+1}}d\xi.
$$
Moreover, recall that the Fourier transform $\widehat{u}(w)$ of $u(x)$ is defined by
$$
\widehat{u}(w)=\int_{-\infty}^{\infty}e^{-iwx}u(x)dx.
$$

In order to establish the variational structure which enables us to reduce the existence of solutions of (FHS)$_\lambda$ to find critical points of the
corresponding functional, it is necessary to construct appropriate function spaces. In what follows, we introduce some fractional spaces, for more
details see \cite{ErvinR06}. To this end, denote by $L^p(\mathbb{R},\mathbb{R}^n)$ ($2\leq p <\infty$) the Banach spaces of functions on $\mathbb{R}$ with values in $\mathbb{R}^n$ under
the norms
$$
\|u\|_{L^p}=\Bigl(\int_{\mathbb{R}}|u(t)|^p dt\Bigr)^{1/p},
$$
and $L^{\infty}(\mathbb{R},\mathbb{R}^n)$ is the Banach space of essentially bounded functions from $\mathbb{R}$ into $\mathbb{R}^n$ equipped with the norm
$$
\|u\|_{\infty}=\mbox{ess} \sup\left\{|u(t)|: t\in \mathbb{R} \right\}.
$$
Let $0< \alpha \leq 1$ and $1<p<\infty$. The fractional derivative space $E_{0}^{\alpha ,p}$ is defined by the closure of $C_{0}^{\infty}([0,T], \mathbb{R}^n)$ with respect to the norm
\begin{equation}\label{norm}
\|u\|_{\alpha ,p} = \left(\int_{0}^{T} |u(t)|^pdt + \int_{0}^{T}|{_{0}}D_{t}^{\alpha}u(t)|^pdt  \right)^{1/p}, \;\;\forall\; u\in E_{0}^{\alpha ,p}.
\end{equation}
This space can be characterized by $E_{0}^{\alpha , p} = \{u\in L^{p}([0,T], \mathbb{R}^n)/\;\; {_{0}}D_{t}^{\alpha}u \in L^{p}([0,T], \mathbb{R}^n)\;\mbox{and}\;u(0) = u(T) = 0\}$. Moreover $(E_{0}^{\alpha ,p}, \|.\|_{\alpha ,p})$ is a reflexive and separable Banach space. Considering the space $E_{0}^{\alpha,p}$, we have the following results 

\begin{Prop}\label{FC-FEprop3}
\cite{FJYZ} Let $0< \alpha \leq 1$ and $1 < p < \infty$. For all $u\in E_{0}^{\alpha ,p}$, if $\alpha > 1/p$ we have
\begin{equation}\label{FC-FEeq3}
\|u\|_{L^{p}} \leq \frac{T^{\alpha}}{\Gamma (\alpha +1)} \|_{0}D_{t}^{\alpha}u\|_{L^{p}}.
\end{equation}
If $\alpha > 1/p$ and $\frac{1}{p} + \frac{1}{q} = 1$, then
\begin{equation}\label{FC-FEeq4}
\|u\|_{\infty} \leq \frac{T^{\alpha -1/p}}{\Gamma (\alpha)((\alpha - 1)q +1)^{1/q}}\|_{0}D_{t}^{\alpha}u\|_{L^{p}}.
\end{equation}
\end{Prop}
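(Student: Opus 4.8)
The plan is to base both estimates on the representation formula
$$
u(x) = {_0}I_x^\alpha\big({_0}D_t^\alpha u\big)(x) = \frac{1}{\Gamma(\alpha)}\int_0^x (x-t)^{\alpha-1}\,{_0}D_t^\alpha u(t)\,dt,
\qquad x\in[0,T],
$$
valid for every $u\in E_0^{\alpha,p}$. First I would establish this identity. For $u\in C_0^\infty([0,T],\R^n)$ it is the fractional fundamental theorem of calculus: the operator ${_0}I_x^\alpha$ is a left inverse of ${_0}D_t^\alpha$ up to a boundary term proportional to $x^{\alpha-1}$, and the condition $u(0)=0$ makes that boundary term vanish, so $u$ is recovered exactly. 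To reach a general $u\in E_0^{\alpha,p}$ I would take $u_k\in C_0^\infty$ with $u_k\to u$ in the $\|\cdot\|_{\alpha,p}$-norm (possible by the definition of $E_0^{\alpha,p}$), so that ${_0}D_t^\alpha u_k\to{_0}D_t^\alpha u$ in $L^p([0,T])$, and pass to the limit using that ${_0}I_x^\alpha$ is bounded on $L^p([0,T])$ --- a fact established independently in the next step.

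For \eqref{FC-FEeq3} I would invoke Young's inequality for convolutions. Extending ${_0}D_t^\alpha u$ by zero outside $[0,T]$ and setting $k(s)=s^{\alpha-1}/\Gamma(\alpha)$ on $(0,T]$ (and $0$ elsewhere), the representation formula reads $u=k*({_0}D_t^\alpha u)$ on $[0,T]$. Young's inequality then gives
$$
\|u\|_{L^p}\le \|k\|_{L^1}\,\|{_0}D_t^\alpha u\|_{L^p},
\qquad
\|k\|_{L^1}=\frac{1}{\Gamma(\alpha)}\int_0^T s^{\alpha-1}\,ds=\frac{T^\alpha}{\alpha\,\Gamma(\alpha)}=\frac{T^\alpha}{\Gamma(\alpha+1)},
$$
which is exactly \eqref{FC-FEeq3}. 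Since $k\in L^1$ as soon as $\alpha>0$, this step requires no lower bound on $\alpha$, and the very same computation yields the boundedness of ${_0}I_x^\alpha$ on $L^p$ used in the density argument above.

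For \eqref{FC-FEeq4} I would apply H\"older's inequality with conjugate exponents $p,q$ pointwise in the representation formula: for each fixed $x\in[0,T]$,
$$
|u(x)|\le \frac{1}{\Gamma(\alpha)}\Big(\int_0^x (x-t)^{(\alpha-1)q}\,dt\Big)^{1/q}\,\|{_0}D_t^\alpha u\|_{L^p}.
$$
Integrability of $(x-t)^{(\alpha-1)q}$ requires $(\alpha-1)q>-1$, i.e.\ $\alpha>1-\tfrac1q=\tfrac1p$, which is precisely the hypothesis; under it
$$
\int_0^x (x-t)^{(\alpha-1)q}\,dt=\frac{x^{(\alpha-1)q+1}}{(\alpha-1)q+1}.
$$
Raising to the power $1/q$ produces $x^{\alpha-1/p}/((\alpha-1)q+1)^{1/q}$; since $\alpha-1/p>0$ I would bound $x^{\alpha-1/p}\le T^{\alpha-1/p}$ and take the supremum over $x\in[0,T]$ to obtain \eqref{FC-FEeq4}.

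The only genuinely delicate point is the representation formula and its extension by density; once that is secured, both inequalities follow immediately from Young's and H\"older's inequalities, with the hypothesis $\alpha>1/p$ entering only to guarantee the integrability that produces the $L^\infty$ bound.
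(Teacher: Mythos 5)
Your proof is correct, and it follows essentially the same route as the source the paper cites for this result (the paper itself gives no proof, only the reference \cite{FJYZ}): the representation $u={_0}I_x^{\alpha}({_0}D_t^{\alpha}u)$ justified via $u(0)=0$ and density, then Young's convolution inequality for the $L^p$ bound and pointwise H\"older for the $L^\infty$ bound, with the exact constants $T^{\alpha}/\Gamma(\alpha+1)$ and $T^{\alpha-1/p}/\bigl(\Gamma(\alpha)((\alpha-1)q+1)^{1/q}\bigr)$ emerging as you compute. The only cosmetic remark is that the vanishing of the boundary term in the fractional fundamental theorem is really the condition $({_0}I_x^{1-\alpha}u)(0)=0$, which for continuous $u$ holds automatically and not specifically because $u(0)=0$; this does not affect the validity of the argument.
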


\noindent 
According to (\ref{FC-FEeq3}), we can consider in $E_{0}^{\alpha ,p}$ the following norm
\begin{equation}\label{FC-FEeq5}
\|u\|_{\alpha ,p} = \|_{0}D_{t}^{\alpha}u\|_{L^{p}},
\end{equation}
and (\ref{FC-FEeq5}) is equivalent to (\ref{norm}).

\begin{Prop}\label{FC-FEprop4}
\cite{FJYZ} Let $0< \alpha \leq 1$ and $1 < p < \infty$. Assume that $\alpha > \frac{1}{p}$ and $\{u_{k}\} \rightharpoonup u$ in $E_{0}^{\alpha ,p}$. Then $u_{k} \to u$ in $C[0,T]$, i.e.
$$
\|u_{k} - u\|_{\infty} \to 0,\;k\to \infty.
$$
\end{Prop}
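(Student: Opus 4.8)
The plan is to establish the result via the Arzelà–Ascoli theorem, upgrading weak convergence to uniform convergence by showing that norm-bounded subsets of $E_0^{\alpha,p}$ are uniformly bounded and equicontinuous in $C[0,T]$. First I would observe that the weak convergence $\{u_k\}\rightharpoonup u$ forces $\{u_k\}$ to be bounded in $E_0^{\alpha,p}$, say $\|u_k\|_{\alpha,p}\le M$, by the uniform boundedness principle. Combining this with the embedding estimate (\ref{FC-FEeq4}) of Proposition \ref{FC-FEprop3} immediately yields a uniform bound $\|u_k\|_\infty\le CM$, so $\{u_k\}$ is uniformly bounded in $C[0,T]$.

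The technical heart of the argument is the equicontinuity. Since each $u\in E_0^{\alpha,p}$ satisfies $u(0)=0$, one has the representation $u(t)=({_0}I_t^\alpha\,{_0}D_t^\alpha u)(t)=\frac{1}{\Gamma(\alpha)}\int_0^t(t-s)^{\alpha-1}\,{_0}D_s^\alpha u(s)\,ds$. Writing $g={_0}D_s^\alpha u$ and taking $0\le t_1<t_2\le T$, I would split
\[
u(t_2)-u(t_1)=\frac{1}{\Gamma(\alpha)}\int_0^{t_1}\bigl[(t_2-s)^{\alpha-1}-(t_1-s)^{\alpha-1}\bigr]g(s)\,ds+\frac{1}{\Gamma(\alpha)}\int_{t_1}^{t_2}(t_2-s)^{\alpha-1}g(s)\,ds,
\]
and estimate each term by Hölder's inequality with exponents $p,q$ where $\frac1p+\frac1q=1$. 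Because $\alpha>1/p$ is equivalent to $(\alpha-1)q+1>0$, the powers $(t-s)^{(\alpha-1)q}$ are integrable, and a direct computation bounds both terms by a constant multiple of $|t_2-t_1|^{\alpha-1/p}\|g\|_{L^p}$. Thus every $u$ with $\|u\|_{\alpha,p}\le M$ is Hölder continuous of exponent $\alpha-1/p$ with a modulus of continuity independent of $u$; in particular $\{u_k\}$ is equicontinuous. The delicate point here is the sharp handling of the difference $(t_2-s)^{\alpha-1}-(t_1-s)^{\alpha-1}$ of the singular (yet integrable) kernels, which is exactly where the hypothesis $\alpha>1/p$ is indispensable.

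With uniform boundedness and equicontinuity in hand, Arzelà–Ascoli provides a subsequence $u_{k_j}$ converging uniformly on $[0,T]$ to some $v\in C[0,T]$. To identify the limit, I would use that the inclusion $E_0^{\alpha,p}\hookrightarrow L^p([0,T])$ is bounded and linear, so $u_{k_j}\rightharpoonup u$ weakly in $L^p$; on the other hand, uniform convergence on the bounded interval $[0,T]$ gives $u_{k_j}\to v$ strongly, hence weakly, in $L^p$. By uniqueness of weak limits, $v=u$ almost everywhere, and since both functions are continuous (the Hölder estimate above applies to $u$ as well), $v=u$ on all of $[0,T]$. Finally, to promote subsequential convergence to convergence of the full sequence, I would argue by contradiction: if $\|u_k-u\|_\infty\not\to0$, then some subsequence stays bounded away from $u$, yet by the same compactness argument it possesses a further subsequence converging uniformly to $u$, a contradiction. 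This yields $\|u_k-u\|_\infty\to0$, completing the proof.
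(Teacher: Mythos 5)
Your proof is correct and is essentially the same argument as in the cited source: the paper itself gives no proof of this proposition (it is quoted from \cite{FJYZ}), and the proof there proceeds exactly as yours does --- boundedness of $\{u_k\}$ via Banach--Steinhaus, uniform boundedness and equicontinuity via the H\"older estimate $|u(t_2)-u(t_1)|\le C|t_2-t_1|^{\alpha-1/p}\|{_0}D_t^{\alpha}u\|_{L^p}$, compactness via Arzel\`a--Ascoli, and identification of the uniform limit with the weak limit. The only cosmetic difference is that \cite{FJYZ} identifies the limit through weak convergence in $C[0,T]$ (every bounded functional on $C[0,T]$ restricts to one on $E_0^{\alpha,p}$), whereas you pass through strong and weak convergence in $L^p$.
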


We denote by $E^{\alpha} = E_{0}^{\alpha ,2}$, this is a Hilbert space with respect to the norm $\|u\|_{\alpha} = \|u\|_{\alpha ,2}$ given by (\ref{FC-FEeq5}).

For $\alpha>0$, define the semi-norm
$$
|u|_{I^{\alpha}_{-\infty}}=\|_{-\infty}D^{\alpha}_x u\|_{L^2}
$$
and the norm
\begin{equation}\label{eqn:defn Rnorm}
\|u\|_{I^{\alpha}_{-\infty}}=\Bigl(\|u\|^2_{L^2}+|u|^2_{I^{\alpha}_{-\infty}}\Bigr)^{1/2}
\end{equation}
and let
$$
I^{\alpha}_{-\infty}=\overline{C^{\infty}_0(\mathbb{R},\mathbb{R}^n)}^{\|\cdot\|_{I^{\alpha}_{-\infty}}},
$$
where $C_0^{\infty}(\mathbb{R},\mathbb{R}^n)$ denotes the space of infinitely differentiable functions from $\mathbb{R}$ into $\mathbb{R}^n$ with
vanishing property at infinity.

Now we can define the fractional Sobolev space $H^{\alpha}(\mathbb{R},\mathbb{R}^n)$ in terms of the Fourier transform. Choose $0<\alpha<1$, define
the semi-norm
$$
|u|_{\alpha}=\||w|^{\alpha}\widehat{u}\|_{L^2}
$$
and the norm
$$
\|u\|_{\alpha}=\Bigl(\|u\|^2_{L^2}+|u|^2_{\alpha}\Bigr)^{1/2}
$$
and let
\begin{equation}\label{eqn:alphanorm}
H^{\alpha}=\overline{C^{\infty}_0(\mathbb{R},\mathbb{R}^n)}^{\|\cdot\|_{\alpha}}.
\end{equation}
Moreover, we note that a function $u\in L^2(\mathbb{R},\mathbb{R}^n)$ belongs to $I^{\alpha}_{-\infty}$ if and only if
$$
|w|^{\alpha}\widehat{u}\in L^2(\mathbb{R},\mathbb{R}^n).
$$
Especially, we have
$$
|u|_{I^{\alpha}_{-\infty}}=\||w|\widehat{u}\|_{L^2}.
$$
Therefore, $I^{\alpha}_{-\infty}$ and $H^{\alpha}$ are equivalent with equivalent semi-norm and norm. Analogous to $I^{\alpha}_{-\infty}$,
we introduce $I^{\alpha}_{\infty}$. Define the semi-norm
$$
|u|_{I^{\alpha}_{\infty}}=\|_{x}D^{\alpha}_{\infty} u\|_{L^2}
$$
and the norm
\begin{equation}\label{eqn:defn Rnorm}
\|u\|_{I^{\alpha}_{\infty}}=\Bigl(\|u\|^2_{L^2}+|u|^2_{I^{\alpha}_{\infty}}\Bigr)^{1/2}
\end{equation}
and let
$$
I^{\alpha}_{\infty}=\overline{C^{\infty}_0(\mathbb{R},\mathbb{R}^n)}^{\|\cdot\|_{I^{\alpha}_{\infty}}}.
$$
Then $I^{\alpha}_{-\infty}$ and $I^{\alpha}_{\infty}$ are equivalent with equivalent semi-norm and norm, see \cite{ErvinR06}.

Let $C(\mathbb{R},\mathbb{R}^n)$ denote the space of continuous functions from $\mathbb{R}$ into $\mathbb{R}^n$. Then we obtain the following lemma.
\begin{lemma}\label{Lem:LinftyContH}\cite[Theorem 2.1]{Torres12}
If $\alpha>1/2$, then $H^{\alpha}\subset C(\mathbb{R},\mathbb{R}^n)$ and there is a constant $C_\infty=C_{\alpha,\infty}$ such that
\begin{equation}\label{12}
\|u\|_{\infty}=\sup_{x\in \mathbb{R}}|u(x)|\leq C_\infty \|u\|_{\alpha}.
\end{equation}
\end{lemma}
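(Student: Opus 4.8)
The plan is to prove this embedding directly through the Fourier transform, exploiting the fact that the $H^{\alpha}$-norm controls a weighted $L^2$-norm of $\widehat u$ while the weight $(1+|w|^{2\alpha})^{-1}$ is integrable on $\mathbb{R}$ exactly when $\alpha>1/2$. First I would work on the dense subspace $C_0^{\infty}(\mathbb{R},\mathbb{R}^n)$, where every integral converges absolutely and the Fourier inversion formula holds pointwise (all estimates being applied componentwise, with $|u(x)|$ the Euclidean norm). For such $u$, inversion gives $u(x)=\frac{1}{2\pi}\int_{\mathbb{R}} e^{iwx}\widehat u(w)\,dw$, whence
$$
|u(x)|\le \frac{1}{2\pi}\int_{\mathbb{R}}|\widehat u(w)|\,dw \foral x\in\mathbb{R}.
$$

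The next step is to bound the right-hand side by the $H^{\alpha}$-norm. I would insert the weight $(1+|w|^{2\alpha})^{1/2}$ and apply the Cauchy--Schwarz inequality:
$$
\int_{\mathbb{R}}|\widehat u(w)|\,dw \le \Bigl(\int_{\mathbb{R}}\frac{dw}{1+|w|^{2\alpha}}\Bigr)^{1/2}\Bigl(\int_{\mathbb{R}}(1+|w|^{2\alpha})|\widehat u(w)|^2\,dw\Bigr)^{1/2}.
$$
The first factor is finite precisely because $2\alpha>1$; I would denote its square root by $\kappa_\alpha$. The second factor equals $\|\widehat u\|_{L^2}^2+\||w|^{\alpha}\widehat u\|_{L^2}^2$, which by Plancherel's theorem is a fixed multiple of $\|u\|_{L^2}^2+|u|_{\alpha}^2=\|u\|_{\alpha}^2$. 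Combining these two estimates yields $\|u\|_{\infty}\le C_\infty\|u\|_{\alpha}$ on $C_0^{\infty}$, with $C_\infty$ an explicit constant depending only on $\alpha$ and the transform normalization.

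Finally I would extend the inequality to all of $H^{\alpha}$ by density. Since $H^{\alpha}$ is, by its very definition in \eqref{eqn:alphanorm}, the $\|\cdot\|_{\alpha}$-closure of $C_0^{\infty}(\mathbb{R},\mathbb{R}^n)$, any $u\in H^{\alpha}$ is the $\|\cdot\|_{\alpha}$-limit of a sequence $u_k\in C_0^{\infty}$. The uniform bound just established shows that $(u_k)$ is Cauchy in $L^{\infty}$, hence converges uniformly to a continuous function; since $u_k\to u$ also in $L^2$, this uniform limit must coincide almost everywhere with $u$. Thus $u$ admits a continuous representative and the estimate \eqref{12} passes to the limit, giving both $H^{\alpha}\subset C(\mathbb{R},\mathbb{R}^n)$ and the stated bound.

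The computation is entirely routine; the only genuine point is the integrability of $(1+|w|^{2\alpha})^{-1}$, which fails for $\alpha\le 1/2$ and is exactly what forces the hypothesis $\alpha>1/2$. Accordingly I do not expect a serious obstacle, beyond taking care that the density argument truly produces a continuous representative rather than merely an $L^{\infty}$ bound on an a priori discontinuous object.
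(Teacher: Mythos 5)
Your argument is correct and is essentially the same Fourier-analytic proof given in the cited reference \cite[Theorem 2.1]{Torres12} (the present paper only quotes the result): pointwise Fourier inversion, Cauchy--Schwarz against the weight $(1+|w|^{2\alpha})^{-1}$, whose integrability is exactly the condition $\alpha>1/2$, Plancherel, and then extension by density with the uniform Cauchy argument supplying the continuous representative. The only cosmetic point is to track the $2\pi$ normalization consistently, since the paper's seminorm $|u|_{\alpha}=\||w|^{\alpha}\widehat u\|_{L^2}$ carries no such factor while $\|\widehat u\|_{L^2}^2=2\pi\|u\|_{L^2}^2$; this only changes the constant $C_\infty$.
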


\begin{remark}\label{Rem:Lp}
From Lemma \ref{Lem:LinftyContH}, we know that if $u\in H^{\alpha}$ with $1/2<\alpha<1$, then $u\in L^p(\mathbb{R},\mathbb{R}^n)$ for all $p\in [2,\infty)$, since
$$
\int_{\mathbb{R}}|u(x)|^p dx \leq \|u\|^{p-2}_{\infty}\|u\|^2_{L^2}.
$$
\end{remark}

In what follows, we introduce the fractional space in which we will construct the variational framework of (FHS)$_\lambda$. Let
$$
X^{\alpha}=\Bigl\{u\in H^{\alpha}: \int_{\mathbb{R}}[|_{-\infty}D^{\alpha}_{t}u(t)|^2+(L(t)u(t),u(t))]dt<\infty\Bigr\},
$$
then $X^{\alpha}$ is a reflexive and separable Hilbert space with the inner product
$$
\langle u,v \rangle_{X^{\alpha}}=\int_{\mathbb{R}}[(_{-\infty}D^{\alpha}_{t}u(t),_{-\infty}D^{\alpha}_{t}v(t))+(L(t)u(t),v(t))]dt
$$
and the corresponding norm is
$$
\|u\|^2_{X^{\alpha}}=\langle u,u \rangle_{X^{\alpha}}.
$$
For $\lambda>0$, we also need the following inner product
$$
\langle u,v \rangle_{X^{\alpha,\lambda}}=\int_{\mathbb{R}}[(_{-\infty}D^{\alpha}_{t}u(t),_{-\infty}D^{\alpha}_{t}v(t))+\lambda(L(t)u(t),v(t))]dt
$$
and the corresponding norm is
$$
\|u\|^2_{X^{\alpha,\lambda}}=\langle u,u \rangle_{X^{\alpha,\lambda}}.
$$

\begin{lemma}\label{Lem:XcontH}
\cite{ZhangTorres} Suppose $L(t)$ satisfies {\rm ($\mathcal{L}$)$_1$} and {\rm ($\mathcal{L}$)$_2$}, then $X^{\alpha}$ is continuously embedded in $H^{\alpha}$.
\end{lemma}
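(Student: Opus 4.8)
The plan is to show that the $X^\alpha$-norm dominates the $H^\alpha$-norm, so that the identity map $X^\alpha \hookrightarrow H^\alpha$ is bounded. Recall that
$$
\|u\|_{X^\alpha}^2 = \int_{\mathbb{R}}\bigl[|_{-\infty}D^\alpha_t u|^2 + (L(t)u,u)\bigr]\,dt,
\qquad
\|u\|_\alpha^2 = \|u\|_{L^2}^2 + |u|_{I^\alpha_{-\infty}}^2 = \|u\|_{L^2}^2 + \|_{-\infty}D^\alpha_t u\|_{L^2}^2,
$$
where I have used the equivalence of $H^\alpha$ and $I^\alpha_{-\infty}$ established earlier in Section 2. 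The semi-norm terms already agree, so the entire content of the lemma is to bound $\|u\|_{L^2}^2$ by a constant multiple of $\|u\|_{X^\alpha}^2$. The term $\int_{\mathbb{R}}(L(t)u,u)\,dt$ in $\|u\|_{X^\alpha}^2$ is nonnegative by positive semi-definiteness, but it does not by itself control $\|u\|_{L^2}$ because $L$ may degenerate (indeed $L\equiv 0$ on $\overline{T}$); hence the $L^2$ control must come from splitting $\mathbb{R}$ according to the size of $l(t)$ and using the semi-norm on the degenerate part.

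First I would fix the constant $c>0$ from $(\mathcal{L})_1$ and split the integral over the set $A:=\{l\ge c\}$ and its complement $\{l<c\}$. On $A$ one has the pointwise bound $(L(t)u,u)\ge l(t)|u|^2 \ge c|u|^2$, so
$$
c\int_{A}|u|^2\,dt \le \int_{A}(L(t)u,u)\,dt \le \|u\|_{X^\alpha}^2.
$$
On the complementary set $\{l<c\}$, which by $(\mathcal{L})_1$ has finite measure $m:=\operatorname{meas}\{l<c\}<C_\infty^{-2}$, I would control the $L^2$ mass by the $L^\infty$ bound of Lemma \ref{Lem:LinftyContH}: since $\|u\|_\infty \le C_\infty \|u\|_\alpha$ and $H^\alpha\simeq I^\alpha_{-\infty}$,
$$
\int_{\{l<c\}}|u|^2\,dt \le m\,\|u\|_\infty^2 \le m\,C_\infty^2\,\|u\|_\alpha^2.
$$
Adding the two pieces gives $\|u\|_{L^2}^2 \le \tfrac{1}{c}\|u\|_{X^\alpha}^2 + m\,C_\infty^2\,\|u\|_\alpha^2$, and recalling $\|u\|_\alpha^2 = \|u\|_{L^2}^2 + |_{-\infty}D^\alpha_t u|_{L^2}^2 \le \|u\|_{L^2}^2 + \|u\|_{X^\alpha}^2$.

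The crucial point — and the one I expect to be the only real obstacle — is the strict inequality $m\,C_\infty^2 < 1$, which is precisely the quantitative hypothesis in $(\mathcal{L})_1$. Substituting the previous line and collecting the $\|u\|_{L^2}^2$ terms on the left yields
$$
\bigl(1 - m\,C_\infty^2\bigr)\|u\|_{L^2}^2 \le \Bigl(\tfrac{1}{c} + m\,C_\infty^2\Bigr)\|u\|_{X^\alpha}^2,
$$
and since $1 - m\,C_\infty^2 > 0$ this may be divided through to obtain $\|u\|_{L^2}^2 \le C_1\|u\|_{X^\alpha}^2$ with $C_1 = (1-mC_\infty^2)^{-1}(c^{-1}+mC_\infty^2)$. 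Combining this with the identity of the semi-norm terms gives $\|u\|_\alpha \le C\|u\|_{X^\alpha}$ for a constant $C$ depending only on $c$, $m$ and $C_\infty$, which is exactly the continuous embedding claimed. I would close by remarking that the estimate is uniform in $u$, so the inclusion $X^\alpha \hookrightarrow H^\alpha$ is continuous, and that the whole argument uses only $(\mathcal{L})_1$ and $(\mathcal{L})_2$ (the latter guaranteeing that the degenerate set is well-behaved), consistent with the statement.
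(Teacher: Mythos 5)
Your argument is correct and is essentially the standard one: the paper itself only cites \cite{ZhangTorres} for this lemma, but the estimates recorded immediately afterwards in Remark \ref{keynta} (splitting $\mathbb{R}$ into $\{l\ge c\}$ and $\{l<c\}$, bounding the $L^{2}$ mass on the small set via $\|u\|_{\infty}\le C_{\infty}\|u\|_{\alpha}$, and absorbing using $\operatorname{meas}\{l<c\}<C_{\infty}^{-2}$) are exactly the ones you derive. The absorption step is legitimate since $u\in H^{\alpha}$ a priori by the very definition of $X^{\alpha}$, so no circularity arises; your constants differ slightly from \re{13}--\re{14} but that is immaterial for the continuity of the embedding.
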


\begin{remark}\label{keynta}
{\rm
Under the same conditions of Lemma \ref{Lem:XcontH}, for all
$\lambda\geq \frac{1}{c C_\infty^2 \, meas \{l<c\}}$, we also obtain
\begin{equation}\label{13}
\int_{\mathbb{R}}|u(t)|^2 dt\leq \frac{C_\infty^2\, meas\{l<c\}}{1-C_\infty^2\, meas\{l<c\}}\|u\|_{X^{\alpha,\lambda}}=\frac{1}{\Theta}\|u\|_{X^{\alpha,\lambda}}^2
\end{equation}
and
\begin{equation}\label{14}
\|u\|_\alpha^2\leq \Bigl(1+\frac{C_{\infty}^2\, meas\{l<c\}}{1-C_{\infty}^2\, meas \{l<c\}}\Bigr)\|u\|_{X^\alpha}^2=(1+\frac{1}{\Theta})\|u\|^2_{X^{\alpha,\lambda}}.
\end{equation}
Furthermore, for every $p\in (2,\infty)$ and $\lambda\geq \frac{1}{c C_\infty^2 \, meas \{l<c\}}$, we have
\begin{equation}\label{15}
\begin{split}
\int_{\mathbb{R}}|u(t)|^p dt\leq \frac{1}{\Theta^{\frac{p}{2}}\, (meas\{l<c\})^{\frac{p-2}{2}}}\|u\|_{X^{\alpha,\lambda}}^p.
\end{split}
\end{equation}
For more detail see \cite{ZhangTorres}.}
\end{remark}

\section{ Proof of Theorem \ref{Thm:MainTheorem1}}

The aim of section is to establish the proof of Theorem \ref{Thm:MainTheorem1}. For this purpose, we are going to establish the corresponding
variational framework to obtain solutions of (FHS)$_\lambda$. To this end, define the functional $I:\mathcal{B}=X^{\alpha,\lambda}\rightarrow \mathbb{R}$ by
\begin{equation}\label{mt01}
\begin{aligned}
I_\lambda (u)&=\int_{\mathbb{R}}\Bigl[\frac{1}{2}|_{-\infty}D_t^{\alpha}u(t)|^2+\frac{1}{2}(\lambda L(t)u(t),u(t))-W(t,u(t))\Bigr]dt\\
&=\dfrac{1}{2}\|u\|^2_{X^{\alpha,\lambda}}-\int_{\mathbb{R}}W(t,u(t))dt.
\end{aligned}
\end{equation}

Under the conditions of Theorem \ref{Thm:MainTheorem1}, as usual, we see that $I\in C^1(X^{\alpha,\lambda},\mathbb{R})$, i.e., $I$ is a continuously Fr$\acute{e}$chet-differentiable functional
defined on $X^{\alpha,\lambda}$. Moreover, we have
\begin{equation}\label{mt02}
I'_\lambda (u)v=\int_{\mathbb{R}}\Bigl[(_{-\infty}D_t^{\alpha}u(t), _{-\infty}D_t^{\alpha}v(t))+(\lambda L(t)u(t),v(t))-(\nabla W(t,u(t)),v(t))\Bigr]dt
\end{equation}
for all $u$, $v\in X^{\alpha}$, which yields that
\begin{equation}\label{mt03}
I'_\lambda (u)u =\|u\|^2_{X^{\alpha,\lambda}}-\int_{\mathbb{R}}(\nabla W(t,u(t)),u(t))dt.
\end{equation}

\begin{Remark}\label{mainnta1}
We note that $I_\lambda$ has the geometry property of Mountain Pass Theorem. In fact, first we prove that, there exist $\rho , \beta >0$ such that $I_{\lambda} |_{\partial B_{\rho}} \geq \beta$. By Remark \ref{keynta} and Lemma \ref{Lem:LinftyContH}, we have
$$
\|u\|_{L^2}^{2} \leq \frac{1}{\Theta}\|u\|_{X^{\alpha, \lambda}}^{2},\;\;\|u\|_{\alpha}^{2} \leq \left( 1+\frac{1}{\Theta} \right)\|u\|_{X^{\alpha, \lambda}}^{2}\:\:\mbox{and}\;\;\|u\|_{\infty} \leq C_{\infty} \|u\|_{\alpha}.
$$
Therefore 
\begin{equation}\label{mt04}
\|u\|_{\infty} \leq C_{\infty}\left( 1+\frac{1}{\Theta}\right)^{1/2}\|u\|_{X^{\alpha, \lambda}}.
\end{equation}
Now choose $\epsilon >0$ sufficiently small such that $\frac{1}{2} - \frac{\epsilon}{\Theta} >0$. By ($W_2$), $|W(t,u)| = o(|u|^2)$ uniformly in $t$ as $|u| \to 0$, then for all $\epsilon >0$, there exist $\delta >0$ such that
$$
|W(t,u(t))| \leq \epsilon |u(t)|^2\;\;\mbox{whenever} \;\;|u(t)| < \delta.
$$
Let $\rho = \frac{\delta}{C_{\infty}\left( 1+\frac{1}{\Theta} \right)^{1/2}}$ and $\|u\|_{X^{\alpha, \lambda}} \leq \rho$, then 
$$
|u(t)| \leq C_{\infty}\left( 1 + \frac{1}{\Theta} \right)^{1/2}\|u\|_{X^{\alpha , \lambda}} \leq \delta.
$$
Hence
\begin{equation}\label{mt05}
|W(t,u(t))| \leq \epsilon |u(t)|^2,\;\;\forall \;t\in \mathbb{R}.
\end{equation}
So, if $\|u\|_{X^{\alpha, \lambda}} = \rho$, then 
\begin{equation}\label{mt06}
\begin{aligned}
I_{\lambda}(u) &= \frac{1}{2}\|u\|_{X^{\alpha, \lambda}}^2 - \int_{\mathbb{R}}W(t,u(t))dt\\
&\geq \left( \frac{1}{2} - \frac{\epsilon}{\Theta} \right)\|u\|_{X^{\alpha, \lambda}}^{2}\\
&\geq \left( \frac{1}{2} - \frac{\epsilon}{\Theta} \right)\rho^2 \equiv \beta >0.
\end{aligned}
\end{equation}

Let $\varphi \in C_{0}^{\infty}(\mathbb{R},  \mathbb{R}^n)$ with $\|\varphi\|_{X^{\alpha, \lambda}} = 1$. It remains to prove that there exists an $e\in X^{\alpha, \lambda}$ such that $\|e\|_{X^{\alpha, \lambda}} >\rho$ and $I_{\lambda}(e) \leq 0$, where $\rho$ is defined above. Arguing by contradiction, we may assume that there exists $\{\sigma _k\} \subset \mathbb{R}$, $|\sigma_k| \to \infty$ such that $I_{\lambda}(\sigma_k \varphi) >0$ for all $k$. Then, we have
\begin{equation}\label{mt07}
0< \frac{I_{\lambda}(\sigma_k \varphi)}{\sigma_k^2} = \frac{1}{2} - \int_{\mathbb{R}} \frac{W(t, \sigma_k \varphi)}{|\sigma_k \varphi|^2} |\varphi|^2dt.
\end{equation}   
Since $|\sigma_k \varphi(t)| \to \infty$ for $t$ with $\varphi (t) \neq 0$, and since $\|\varphi\|_{X^{\alpha, \lambda}} = 1$, by (\ref{eqif}) and Fatou's Lemma, we have that
$$
\int_{\mathbb{R}} \frac{W(t, \sigma_k \varphi)}{|\sigma_k \varphi|^2}|\varphi|^2dt \to \infty\;\;\mbox{as}\;\;k\to \infty.
$$
This contradicts (\ref{mt07}). So we conclude taking $e = \sigma \varphi$ with $\sigma$ large enough.
\end{Remark}

Now, let us introduce the Nehari's manifold defined by
$$
\mathcal{N}_{\lambda} = \{u\in X^{\alpha, \lambda}\setminus \{0\}:\;\; \langle I'_{\lambda}(u), u \rangle = 0\},
$$ 
and we note that, for $u\in \mathcal{N}_{\lambda}$
$$
\begin{aligned}
I_{\lambda}(u) &= I_{\lambda}(u) - \frac{1}{2}\langle I'_{\lambda}(u), u \rangle= \int_{\mathbb{R}} \left(\frac{1}{2}( \nabla W(t, u(t)), u(t)) - W(t,u(t))   \right)dt. 
\end{aligned}
$$
Define
$$
c_{\lambda} = \inf_{\mathcal{N}_\lambda} I_{\lambda}(u).
$$

In the following Lemmas we assume that $(\mathcal{L}_1)-(\mathcal{L}_2)$, $(W_1)-(W_4)$ hold and $\lambda > 0$

\begin{lemma}\label{TMlem1}
Let $S_{\lambda} = \{u\in X^{\alpha, \lambda}: \;\;\|u\|_{X^{\alpha, \lambda}} = 1\}$. For all $u \in S_{\lambda}$ there exists a unique $\sigma_{u} >0$ such that $\sigma_u u \in \mathcal{N}_{\lambda}$. Furthermore
$$
I_{\lambda}(\sigma_u u) = \max_{\sigma \geq 0} I_{\lambda}(\sigma u)
$$
\end{lemma}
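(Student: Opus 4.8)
The plan is to study the real-valued fibering map $g(\sigma) = I_\lambda(\sigma u)$ for $\sigma \ge 0$, and to show it possesses a unique positive critical point which is a global maximum; the key point is that critical points of $g$ on $(0,\infty)$ correspond exactly to the multiples $\sigma u$ lying on $\mathcal{N}_\lambda$. Fix $u \in S_\lambda$, so that $\|u\|_{X^{\alpha,\lambda}} = 1$ and $u \not\equiv 0$. From \eqref{mt01},
$$
g(\sigma) = \tfrac{1}{2}\sigma^2 - \int_{\mathbb{R}} W(t,\sigma u(t))\,dt,
$$
and, using the domination in $(W_3)$ to differentiate under the integral,
$$
g'(\sigma) = \sigma - \int_{\mathbb{R}} (\nabla W(t,\sigma u(t)), u(t))\,dt = \tfrac{1}{\sigma}\langle I'_\lambda(\sigma u), \sigma u\rangle .
$$
Hence $g'(\sigma) = 0$ with $\sigma > 0$ is equivalent to $\sigma u \in \mathcal{N}_\lambda$, and the whole statement reduces to showing that $g'$ has a unique zero on $(0,\infty)$ located at the maximum of $g$.

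Next I would record the qualitative shape of $g$. Near the origin, arguing exactly as in Remark \ref{mainnta1} via $(W_2)$, one gets $g(\sigma) \ge \left(\tfrac12 - \tfrac{\epsilon}{\Theta}\right)\sigma^2 > 0$ for $\sigma$ small, so that $g(0) = 0$ and $g$ is positive just to the right of $0$. At the other end, since $\theta > 2$, Remark \ref{nta1}(i) together with \eqref{eqif} forces $\int_{\mathbb{R}} W(t,\sigma u)\,dt$ to grow faster than $\sigma^2$, whence $g(\sigma) \to -\infty$ as $\sigma \to \infty$. Consequently $g$ attains a positive maximum at some $\sigma_u \in (0,\infty)$, which is in particular a critical point, giving existence of the desired multiplier together with the identity $I_\lambda(\sigma_u u) = \max_{\sigma \ge 0} I_\lambda(\sigma u)$.

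Uniqueness is the crux, and it is here that $(W_4)$ is essential. Writing $g'(\sigma) = \sigma\bigl(1 - \psi(\sigma)\bigr)$ with
$$
\psi(\sigma) = \int_{\mathbb{R}} \frac{(\nabla W(t,\sigma u(t)), u(t))}{\sigma}\,dt ,
$$
it suffices to prove that $\psi$ is strictly increasing on $(0,\infty)$. For each $t$ with $q := u(t) \neq 0$ I would factor the integrand as
$$
\frac{(\nabla W(t,\sigma q), q)}{\sigma} = \sigma^{\theta-2}\cdot\frac{(\nabla W(t,\sigma q), q)}{\sigma^{\theta-1}} .
$$
By $(W_1)$ the second factor is strictly positive (since $(\nabla W(t,v),v) \ge \theta W(t,v) > 0$ for $v \neq 0$), and by $(W_4)$ it is strictly increasing in $\sigma$; because $\theta > 2$, the factor $\sigma^{\theta-2}$ is positive and increasing as well. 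Thus the integrand is strictly increasing in $\sigma$ at every $t$ in the positive-measure set $\{u \neq 0\}$, so $\psi$ is strictly increasing, and $\psi(\sigma) = 1$ has at most one solution. This forces $g'$ to have a unique positive zero $\sigma_u$; moreover the monotonicity of $\psi$ gives $g' > 0$ on $(0,\sigma_u)$ and $g' < 0$ on $(\sigma_u,\infty)$, confirming that $\sigma_u$ is the global maximum and that $\sigma_u u \in \mathcal{N}_\lambda$.

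The main obstacle is precisely this monotonicity step: the factorization only yields strict monotonicity because $\theta > 2$ is combined with the structural hypothesis $(W_4)$, and one must also use the bound $(W_3)$ to justify differentiation under the integral sign and to verify the auxiliary limits $\psi(0^{+}) = 0$ (from $(W_2)$) and $\psi(\sigma) \to \infty$ as $\sigma \to \infty$ (from \eqref{eqif}), which guarantee that the unique zero indeed lies in $(0,\infty)$.
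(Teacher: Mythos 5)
Your proof is correct and follows essentially the same route as the paper: analyze the fibering map $\sigma\mapsto I_\lambda(\sigma u)$, use $(W_2)$ and \eqref{eqif} for its behavior at $0$ and $\infty$, and use $(W_4)$ (with $\theta>2$ from $(W_1)$) for uniqueness of the critical point. The only difference is that you spell out the monotonicity argument behind $(W_4)$ via the factorization $\sigma^{\theta-2}\cdot\frac{(\nabla W(t,\sigma q),q)}{\sigma^{\theta-1}}$, a detail the paper leaves implicit.
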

\begin{proof}
Let $u\in S_{\lambda}$ be fixed and define $h(\sigma) = I_{\lambda}(\sigma u)$ for $\sigma \geq 0$. Then
\begin{equation}\label{mt08}
\begin{aligned}
h(\sigma) &= \frac{\sigma^2}{2}\|u\|_{X^{\alpha, \lambda}}^2 - \int_{\mathbb{R}}W(t, \sigma u(t))dt\\
&= \sigma^{2}\left( \frac{1}{2} - \int_{\mathbb{R}}\frac{W(t,\sigma u(t))}{\sigma^2}dt \right).
\end{aligned}
\end{equation}
By ($W_2$), as $\sigma \to 0$ 
\begin{equation}\label{mt09}
\int_{\mathbb{R}} \frac{W(t, \sigma u(t))}{\sigma^2}dt \to 0
\end{equation}
and by (\ref{eqif}), as $\sigma \to \infty$,
\begin{equation}\label{mt10}
\int_{\mathbb{R}} \frac{W(t, \sigma u(t))}{\sigma^2}dt \to \infty.
\end{equation} 
Consequently, by (${W}_4$) and (\ref{mt08})-(\ref{mt10}), there is a unique $\sigma_u = \sigma(u)>0$ such that $h'(\sigma_u) = 0$ and
\begin{equation}\label{mt11}
h(\sigma_u) = \max_{\sigma \geq 0} I_{\lambda}(\sigma u).
\end{equation}  
Furthermore $\sigma_u u \in \mathcal{N}_{\lambda}$.
\end{proof}

\begin{lemma}\label{TMlem2}
The set $\mathcal{N}_{\lambda}$ is bounded away from 0. Furthermore, $\mathcal{N}_\lambda$ is closed in $X^{\alpha, \lambda}$.
\end{lemma}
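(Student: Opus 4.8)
The plan is to prove the two assertions in order, obtaining closedness as a consequence of the lower bound. For $u\in\mathcal N_\lambda$ the defining identity together with (\ref{mt03}) reads $\|u\|^2_{X^{\alpha,\lambda}}=\int_{\mathbb R}(\nabla W(t,u(t)),u(t))\,dt$, and everything is driven by this relation combined with the growth of $\nabla W$ near $0$ and the embedding estimates of Remark \ref{keynta}.

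First I would show $\mathcal N_\lambda$ is bounded away from $0$. By $(W_2)$ the map $\nabla W$ satisfies $|\nabla W(t,u)|=o(|u|)$ uniformly in $t$ as $|u|\to 0$, so fixing $\epsilon>0$ with $\epsilon/\Theta<1$ there is $\delta>0$ such that $(\nabla W(t,u),u)\le\epsilon|u|^2$ whenever $|u|<\delta$. Arguing by contradiction, suppose there is a sequence $\{u_k\}\subset\mathcal N_\lambda$ with $\|u_k\|_{X^{\alpha,\lambda}}\to 0$. By (\ref{mt04}) (equivalently Lemma \ref{Lem:LinftyContH} together with (\ref{14})) we get $\|u_k\|_\infty\to 0$, hence $|u_k(t)|<\delta$ for every $t$ once $k$ is large. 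Substituting into the Nehari identity and using (\ref{13}) gives
$$\|u_k\|^2_{X^{\alpha,\lambda}}=\int_{\mathbb R}(\nabla W(t,u_k),u_k)\,dt\le \epsilon\int_{\mathbb R}|u_k|^2\,dt\le \frac{\epsilon}{\Theta}\,\|u_k\|^2_{X^{\alpha,\lambda}}.$$
Since $u_k\neq 0$ we may divide by $\|u_k\|^2_{X^{\alpha,\lambda}}$ to obtain $1\le\epsilon/\Theta<1$, a contradiction. Therefore there exists $\rho_0>0$ such that $\|u\|_{X^{\alpha,\lambda}}\ge\rho_0$ for all $u\in\mathcal N_\lambda$.

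Next I would deduce closedness. Define $J:X^{\alpha,\lambda}\to\mathbb R$ by $J(u)=\langle I'_\lambda(u),u\rangle=\|u\|^2_{X^{\alpha,\lambda}}-\int_{\mathbb R}(\nabla W(t,u),u)\,dt$. Since $I_\lambda\in C^1(X^{\alpha,\lambda},\mathbb R)$ and the duality pairing is continuous, $J$ is continuous on $X^{\alpha,\lambda}$. Let $\{u_k\}\subset\mathcal N_\lambda$ with $u_k\to u$ in $X^{\alpha,\lambda}$. Continuity of $J$ yields $J(u)=\lim_k J(u_k)=0$, while the lower bound from the previous step gives $\|u\|_{X^{\alpha,\lambda}}=\lim_k\|u_k\|_{X^{\alpha,\lambda}}\ge\rho_0>0$, so $u\neq 0$. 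Hence $u\in\mathcal N_\lambda$, which proves that $\mathcal N_\lambda$ is closed in $X^{\alpha,\lambda}$.

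The main obstacle is the lower bound rather than the closedness: the delicate point is converting the pointwise smallness supplied by $(W_2)$ into a uniform estimate over all of $\mathbb R$, which is exactly where the $L^\infty$-embedding $\|u\|_\infty\le C_\infty\|u\|_\alpha$ and the constant $\Theta$ of Remark \ref{keynta} are essential (and why the estimates there, valid for $\lambda$ sufficiently large, are invoked). A secondary technical issue is the continuity of the Nemytskii term $u\mapsto\int_{\mathbb R}(\nabla W(t,u),u)\,dt$ needed for $J$; this follows in a routine way from the $L^\infty$-embedding together with $(W_2)$ and $(W_3)$, so I would record it briefly but not belabor it.
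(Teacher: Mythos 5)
Your proof is correct and follows essentially the same route as the paper: the lower bound comes from $(W_2)$ plus the $L^\infty$-embedding and \eqref{13}, which make the Nehari term $\int_{\mathbb R}(\nabla W(t,u),u)\,dt$ an $o(\|u\|_{X^{\alpha,\lambda}}^2)$ quantity near $0$, and closedness follows from the continuity of $u\mapsto\langle I'_\lambda(u),u\rangle$ along the convergent sequence combined with that lower bound. If anything, your version is slightly cleaner than the paper's, which phrases the first step via the expansion of $I_\lambda$ (involving $W$) rather than of $\langle I'_\lambda(u),u\rangle$ (involving $\nabla W$), the latter being what the Nehari condition actually requires.
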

\begin{proof}
Following the same way of Remark \ref{mainnta1}, we can conclude that
\begin{equation}\label{mt12}
I_{\lambda}(u) = \frac{1}{2}\|u\|_{X^{\alpha, \lambda}}^{2} + o(\|u\|_{X^{\alpha, \lambda}}^{2})\;\;\mbox{as}\;\;u \to 0.
\end{equation} 
Therefore there exists $\nu >0$ such that $u\in \mathcal{N}_{\lambda}$ implies $\|u\|_{X^{\alpha, \lambda}} \geq \nu$. So, $\mathcal{N}_{\lambda}$ is bounded away from $0$.

Now we prove that the set $\mathcal{N}_{\lambda}$ is closed in $X^{\alpha, \lambda}$. First, we note that $I'_\lambda$ maps bounded sets in $X^{\alpha, \lambda}$ into bounded sets in $X^{\alpha, \lambda}$. In fact, let $\{u_k\}$ be a bounded sequence in $X^{\alpha, \lambda}$, then by (\ref{12}) and (\ref{14}), there exists $K_1>0$ such that for each $k\in \mathbb{N}$:   
$$
\|u_k\|_{\infty} \leq K_1.
$$
From $(W_2)$, there exists $\delta >0$ such that for all $t\in \mathbb{R}$ and $|u|< \delta$
$$
|\nabla W(t,u)| \leq |u|.
$$
Now, let $M_1 = \max\{\overline{W}(u)/\;\;|u|\leq K_1\}$ and $K_2 = \max\{1, \frac{M_1}{\delta}\}$. If $|u_k(t)| < \delta$, then 
$$
|\nabla W(t,u_k(t))| \leq |u_k(t)|.
$$
On the other hand, by ($W_3$), if $\delta \leq |u_k(t)| \leq K_1$, then 
$$
|\nabla W(t, u_k(t))| \leq \overline{W}(u_k(t)) \leq M_1 \leq \frac{M_1}{\delta}|u_k(t)|.
$$
Therefore, for all $k\in \mathbb{N}$ and $t\in \mathbb{R}$
\begin{equation}\label{mt13}
|\nabla W(t, u_k(t))| \leq K_2 |u_k(t)|.
\end{equation}
Next, by (\ref{mt13}), H\"older inequality and (\ref{13})
$$
\begin{aligned}
\left| \int_{\mathbb{R}^n} ( \nabla W(t, u_k(t)), \varphi (t)) dt  \right| &\leq K_2\int_{\mathbb{R}} |u_k(t)||\varphi (t)|dt \\
& \leq \frac{K_2}{\Theta} \|u_k\|_{X^{\alpha , \lambda}} \|\varphi \|_{X^{\alpha , \lambda}}\quad \forall \varphi \in X^{\alpha, \lambda}.
\end{aligned}
$$
So, for each $\varphi \in X^{\alpha, \lambda}$
$$
\begin{aligned}
I'_\lambda (u_k)\varphi &= \langle u_k, \varphi\rangle_{X^{\alpha, \lambda}} - \int_{\mathbb{R}}( \nabla W(t, u_k(t)), \varphi (t)) dt\\
&\leq \|u_k\|_{X^{\alpha, \lambda}}^2\|\varphi\|_{X^{\alpha, \lambda}}^{2} + \frac{K_2}{\Theta}\|u_k\|_{X^{\alpha, \lambda}} \|\varphi\|_{X^{\alpha, \lambda}} \leq K_3.
\end{aligned}
$$
Now we are in position to prove that $\mathcal{N}_{\lambda}$ is closed in $X^{\alpha, \lambda}$. Let  $u_k\in \mathcal{N}_{\lambda}$ such that $u_k \to u$ in $X^{\alpha, \lambda}$. Since $I'_\lambda (u_k)$ is bounded, then we infer from
$$
I'_\lambda(u_k)u_k - I'_{\lambda}(u)u = \langle I'_\lambda (u_k) - I'_\lambda (u), u\rangle - \langle I'_\lambda (u_k), u_k - u\rangle \to 0, \;\;\mbox{as}\;\;k\to \infty,
$$ 
that $I'_\lambda (u) u = 0$. Furthermore, since $\mathcal{N}_\lambda$ is bounded away from $0$, we have 
$$
\|u\|_{X^{\alpha, \lambda}} = \lim_{k\to \infty} \|u_k\|_{X^{\alpha, \lambda}} \geq \nu>0.
$$
So $u\in \mathcal{N}_\lambda$.
\end{proof}

\begin{lemma}\label{TMlem3}
There exists $\kappa>0$ such that $\sigma_u \geq \kappa$ for all $u\in S_{\lambda}$, and for each compact subset $\mathfrak{W}\in S_{\lambda}$ there exists a constant $C_{\mathfrak{W}}>0$ such that 
$$
\sigma_u \leq C_{\mathfrak{W}}\quad \mbox{for all}\;\;u\in S_{\lambda}.
$$
\end{lemma}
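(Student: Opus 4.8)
The plan is to treat the two assertions separately: the lower bound follows directly from the fact that $\mathcal{N}_\lambda$ is bounded away from $0$ (Lemma \ref{TMlem2}), while the upper bound on a compact set requires a compactness-plus-contradiction argument driven by the superquadratic growth (\ref{eqif}).

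For the lower bound, I would fix $u\in S_\lambda$, so that $\|u\|_{X^{\alpha,\lambda}}=1$. By Lemma \ref{TMlem1} the point $\sigma_u u$ lies in $\mathcal{N}_\lambda$, and by Lemma \ref{TMlem2} there is $\nu>0$ with $\|v\|_{X^{\alpha,\lambda}}\geq \nu$ for every $v\in\mathcal{N}_\lambda$. Since $\|\sigma_u u\|_{X^{\alpha,\lambda}}=\sigma_u\|u\|_{X^{\alpha,\lambda}}=\sigma_u$, this immediately gives $\sigma_u\geq \nu$, uniformly in $u\in S_\lambda$, so one takes $\kappa=\nu$.

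For the upper bound (which should read $\sigma_u\leq C_{\mathfrak{W}}$ for $u\in\mathfrak{W}$), I would argue by contradiction: suppose there is a sequence $\{u_k\}\subset\mathfrak{W}$ with $\sigma_{u_k}\to\infty$. By compactness of $\mathfrak{W}$, pass to a subsequence with $u_k\to u$ in $X^{\alpha,\lambda}$ and $u\in\mathfrak{W}$, so $\|u\|_{X^{\alpha,\lambda}}=1$ and in particular $u\neq 0$. Since for fixed $\lambda>0$ the norm $\|\cdot\|_{X^{\alpha,\lambda}}$ is equivalent to $\|\cdot\|_{X^{\alpha}}$, Lemma \ref{Lem:XcontH} and Lemma \ref{Lem:LinftyContH} yield $u_k\to u$ in $L^\infty$, hence $u_k(t)\to u(t)$ for every $t$. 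Using $\|u_k\|_{X^{\alpha,\lambda}}=1$, I would then write
$$
\frac{I_\lambda(\sigma_{u_k}u_k)}{\sigma_{u_k}^2}=\frac{1}{2}-\int_{\mathbb{R}}\frac{W(t,\sigma_{u_k}u_k(t))}{|\sigma_{u_k}u_k(t)|^2}\,|u_k(t)|^2\,dt .
$$
On the set $\{u\neq 0\}$, which has positive measure, $|\sigma_{u_k}u_k(t)|\to\infty$, so by (\ref{eqif}) the integrand tends to $+\infty$ there; since $W\geq 0$ by $(W_1)$, Fatou's Lemma forces the integral to diverge to $+\infty$, whence the left-hand side tends to $-\infty$. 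This contradicts $I_\lambda(\sigma_{u_k}u_k)=\max_{\sigma\geq 0}I_\lambda(\sigma u_k)\geq I_\lambda(0)=0$, and the contradiction gives the desired bound $C_{\mathfrak{W}}$.

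The main obstacle is the upper bound, and within it the delicate point is the passage from norm convergence $u_k\to u$ to pointwise convergence combined with the application of Fatou's Lemma. I must ensure that the limit $u$ is genuinely nonzero on a set of positive measure (guaranteed by $\|u\|_{X^{\alpha,\lambda}}=1$ together with the continuous embedding into $C(\mathbb{R})$) and that the blow-up (\ref{eqif}) is uniform in $t$, so that the pointwise divergence of the integrand is legitimate before integrating. The lower bound, by contrast, is essentially immediate from Lemma \ref{TMlem2}.
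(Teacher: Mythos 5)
Your proof is correct and follows essentially the same route as the paper: the lower bound comes directly from Lemma \ref{TMlem2} via $\|\sigma_u u\|_{X^{\alpha,\lambda}}=\sigma_u\geq\nu$, and the upper bound is the same contradiction argument using compactness of $\mathfrak{W}$, the identity for $I_\lambda(\sigma_k u_k)/\sigma_k^2$, the superquadratic blow-up (\ref{eqif}) and Fatou's Lemma. You are also right that the conclusion should read $\sigma_u\leq C_{\mathfrak{W}}$ for $u\in\mathfrak{W}$ (a typo in the statement), and your explicit contradiction with $I_\lambda(\sigma_k u_k)=\max_{\sigma\geq 0}I_\lambda(\sigma u_k)\geq 0$ merely spells out what the paper leaves implicit.
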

\begin{proof}
For $u\in S_{\lambda}$, there exists $\sigma_u >0$ such that $\sigma_u u\in \mathcal{N}_{\lambda}$. By Lemma \ref{TMlem2}, one sees that $\sigma_u \geq \nu>0$. To prove that $\sigma_u \leq C_{\mathfrak{W}}$ for all $u\in \mathfrak{W} \subset S_{\lambda}$, we argue by contradiction. Suppose that there exists $u_k \in \mathfrak{W}$ such that $\sigma_k = \sigma_{u_k} \to \infty.$ Since $\mathfrak{W}$ is compact, there exists $u\in \mathfrak{W}$ such that $u_k \to u$ in $X^{\alpha, \lambda}$ and $u_k(t) \to u(t)$ a.e. on $\mathbb{R}$. Therefore 
\begin{equation}\label{mt14}
\begin{aligned}
\frac{I_{\lambda} (\sigma_k u_k)}{\sigma_k^2} &= \frac{1}{2}\|u_k\|_{X^{\alpha, \lambda}}^{2} - \int_{\mathbb{R}}\frac{W(t,\sigma_k u_k(t))}{\sigma_k^2}dt\\
&= \frac{1}{2} - \int_{\mathbb{R}} \frac{W(t, \sigma_ku_k(t))}{|\sigma_k u_k(t)|^2}|u_k(t)|^2dt.
\end{aligned}
\end{equation}  
Since $|\sigma_k u_k(t)| \to \infty$ if $u(t) \neq 0$, it follows from (\ref{eqif}), (\ref{mt14}) and Fatou's Lemma that $I_{\lambda}(\sigma_k u_k) \to -\infty$ as $k\to \infty$.  
\end{proof}

\begin{lemma}\label{TMlem4}
$c_\lambda \geq \rho >0$, where $\rho >0$ is independent of $\lambda$. 
\end{lemma}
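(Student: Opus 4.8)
The plan is to combine the fibering characterization of the Nehari manifold from Lemma~\ref{TMlem1} with the Mountain Pass estimate (\ref{mt06}) already recorded in Remark~\ref{mainnta1}. Let $\rho>0$ and $\beta=\left(\frac12-\frac{\epsilon}{\Theta}\right)\rho^2>0$ be the radius and the threshold appearing in (\ref{mt06}). The crucial observation is that $\rho$, $\epsilon$, $\delta$ and $\Theta$ are built only from $C_\infty$, $meas\{l<c\}$ and the $(W_2)$-constant, hence are \emph{independent of} $\lambda$ provided $\lambda\ge\frac{1}{cC_\infty^2\,meas\{l<c\}}$, so that the uniform estimates of Remark~\ref{keynta} are in force.

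First I would fix an arbitrary $u\in\mathcal{N}_\lambda$ and set $v=u/\|u\|_{X^{\alpha,\lambda}}$, so that $v\in S_\lambda$. By Lemma~\ref{TMlem1} there is a unique $\sigma_v>0$ with $\sigma_v v\in\mathcal{N}_\lambda$ and $I_\lambda(\sigma_v v)=\max_{\sigma\ge0}I_\lambda(\sigma v)$. Since $u=\|u\|_{X^{\alpha,\lambda}}\,v$ lies in $\mathcal{N}_\lambda$ with $\|u\|_{X^{\alpha,\lambda}}>0$, the uniqueness part of Lemma~\ref{TMlem1} forces $\sigma_v=\|u\|_{X^{\alpha,\lambda}}$, and therefore $u=\sigma_v v$.

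The key step is then the chain of inequalities
$$
I_\lambda(u)=I_\lambda(\sigma_v v)=\max_{\sigma\ge0}I_\lambda(\sigma v)\ge I_\lambda(\rho v)\ge\beta,
$$
where the last inequality is exactly (\ref{mt06}), applicable because $\|\rho v\|_{X^{\alpha,\lambda}}=\rho$ puts $\rho v$ on $\partial B_\rho$. Taking the infimum over $u\in\mathcal{N}_\lambda$ yields $c_\lambda\ge\beta>0$, which is the asserted bound, the constant $\rho$ of the statement being this $\lambda$-independent $\beta$.

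The only delicate point is the claim of $\lambda$-independence, and this is where I expect the real work to lie rather than in the inequality chain itself. One must verify that none of the constants entering $\beta$ secretly depends on $\lambda$: the embedding constants $C_\infty$ and $\Theta$ are fixed once $meas\{l<c\}<1/C_\infty^2$ and $\lambda\ge\frac{1}{cC_\infty^2\,meas\{l<c\}}$ (Remark~\ref{keynta}), the parameter $\epsilon$ is chosen so that $\frac12-\frac{\epsilon}{\Theta}>0$ using only $\Theta$, and the radius $\delta$ supplied by $(W_2)$ depends on $\epsilon$ but not on $\lambda$. Consequently $\rho=\delta\,C_\infty^{-1}(1+\frac1\Theta)^{-1/2}$ and $\beta$ are uniform in $\lambda$ over the admissible range, which completes the argument.
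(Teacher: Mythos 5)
Your argument is correct, but it is a genuinely different route from the paper's. The paper's proof is a two-line computation: for $u\in\mathcal{N}_\lambda$ it writes $I_\lambda(u)=I_\lambda(u)-\frac{1}{\theta}\langle I'_\lambda(u),u\rangle\geq\left(\frac12-\frac1\theta\right)\|u\|_{X^{\alpha,\lambda}}^2$ using the Ambrosetti--Rabinowitz condition $(W_1)$ to discard the integral term, and then invokes the lower bound $\|u\|_{X^{\alpha,\lambda}}\geq\nu$ from Lemma~\ref{TMlem2} to conclude $c_\lambda\geq\left(\frac12-\frac1\theta\right)\nu^2$. You instead run the standard ``Nehari manifold lies outside the mountain-pass ball'' argument: identify $u\in\mathcal{N}_\lambda$ with $\sigma_v v$ via the uniqueness in Lemma~\ref{TMlem1}, use the fibering maximum to compare with the value on the sphere $\partial B_\rho$, and quote the estimate (\ref{mt06}). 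Both are legitimate; yours leans on $(W_4)$ (through the uniqueness of $\sigma_v$) and the full ring estimate of Remark~\ref{mainnta1}, while the paper's leans on $(W_1)$ and the norm bound of Lemma~\ref{TMlem2} --- which is itself proved by essentially the same small-ball computation, so the two proofs ultimately rest on the same ingredients. A point in your favor: you make explicit that the uniformity in $\lambda$ only holds on the admissible range $\lambda\geq\frac{1}{cC_\infty^2\,meas\{l<c\}}$ where the estimates of Remark~\ref{keynta} apply; the paper states the lemma for all $\lambda>0$ and leaves this restriction implicit, so your discussion of $\lambda$-independence is the more careful one.
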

\begin{proof}
For $u\in \mathcal{N}_{\lambda}$, ($W_1$) and Lemma \ref{TMlem2}, we obtain:
$$
\begin{aligned}
I_{\lambda}(u) &= I_\lambda(u) - \frac{1}{\theta} \langle I'_\lambda(u), u \rangle\\
& = \left( \frac{1}{2}-\frac{1}{\theta} \right)\|u\|_{X^{\alpha, \lambda}}^{2} + \int_{\mathbb{R}} \left(\frac{1}{\theta} ( \nabla W(t,u), u) - W(t,u)  \right)dt\\
&\geq \left( \frac{1}{2} - \frac{1}{\theta}\right)\|u\|_{X^{\alpha, \lambda}}^{2}\geq \left( \frac{1}{2}-\frac{1}{\theta} \right) \nu := \rho >0.
\end{aligned}
$$
\end{proof}

\begin{remark}\label{TMnta1}
Following the same way of \cite{Torres17}, by Lemma \ref{TMlem1} we can get the following characterization:
$$
c_{\lambda} = \inf_{u\in \mathcal{N}_{\lambda}} I_\lambda (u) = \inf_{u\in X^{\alpha, \lambda}\setminus \{0\}} \max_{s>0} I_\lambda (su) = \inf_{u\in S_\lambda \setminus \{0\}} \max_{s>0} I_\lambda (su).
$$ 
On the other hand, choosing $\varphi_0 \in C_{0}^{\infty}(T)$, there exists a constant $\mathfrak{C}_0 >0$ independent of $\lambda$, such that
\begin{equation}\label{mt31}
c_\lambda = \inf_{u\in X^{\alpha, \lambda}\setminus \{0\}} \max_{s >0} I_{\lambda}(s u) \leq \max_{s \geq 0} I_\lambda (s \varphi_0) \leq \mathfrak{C}_0.
\end{equation} 
\end{remark}

Now, define the mapping $m_\lambda : S_\lambda \to \mathcal{N_\lambda}$ by setting
$$
m_\lambda (u) :=\sigma_u u,
$$
where $\sigma_u$ is as in Lemma \ref{TMlem1} - 1 and $S_\lambda$ is the unit sphere in $X^{\alpha, \lambda}$. Furthermore, by Lemma \ref{TMlem1} and Proposition $3.1$ of \cite{ASTW}, $m_\lambda$ is a homeomorphism between $S_\lambda$ and $\mathcal{N}_\lambda$ and the inverse of $m_\lambda$ is given by
\begin{equation}\label{mt15}
m_{\lambda}^{-1}(u) = \frac{u}{\|u\|_{X^{\alpha, \lambda}}}.
\end{equation}

Now we shall consider the functional $\Phi_{\lambda}: S_\lambda \to \mathbb{R}$ defined by
$$
\Phi_\lambda (u) = I_{\lambda}(m_{\lambda}(u)).
$$
As in \cite{ASTW}, we have the following Lemma.

\begin{lemma}\label{TMlem5}
\begin{enumerate}
\item $\Phi_{\lambda} \in C^1(S_{\lambda}, \mathbb{R})$ and
$$
\langle \Phi'_\lambda (u), v \rangle = \|m_\lambda (u)\|_{X^{\alpha, \lambda}} \langle I'_\lambda (m_\lambda(u)), v\rangle  
$$
for all $v\in \mathcal{T}_{w}(S_\lambda) = \{h\in X^{\alpha, \lambda}:\;\; \langle u, h\rangle_{X^{\alpha, \lambda}} = 0 \}$
\item If $\{u_n\}$ is a PS sequence for $\Phi_\lambda$ then $\{m_\lambda (u_n)\}$ is a PS sequence for $I_\lambda$. If $\{u_n\} \subset \mathcal{N}_{\lambda}$ is a bounded PS sequence for $I_\lambda$, then $\{m_{\lambda}^{-1}(u_n)\}$ is a PS sequence for $\Phi_\lambda$, where $m_\lambda^{-1}(u)$ is given by (\ref{mt15}).
\item  $$
\inf_{S_\lambda} \Phi_\lambda = \inf_{\mathcal{N}_\lambda} I_\lambda.
$$  
Furthermore, $u$ is a critical point of $\Phi_\lambda$ if and only if $m_\lambda(u)$ is a nontrivial critical point of $I_\lambda$. Furthermore, the corresponding critical values of $\Phi_\lambda$ and $I_\lambda$ coincide.
\end{enumerate}
\end{lemma}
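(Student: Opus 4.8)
The plan is to run the Nehari-manifold reduction of Szulkin and Weth \cite{ASTW}. By Lemma \ref{TMlem1} and the cited Proposition 3.1 of \cite{ASTW}, the map $m_\lambda(u)=\sigma_u u$ is already a homeomorphism between $S_\lambda$ and $\mathcal{N}_\lambda$, so the three assertions reduce to verifying that the structural hypotheses of that abstract framework hold here (these are exactly what $(W_1)$--$(W_4)$ encode) and then transcribing the arguments. The crucial ingredient throughout is the continuous dependence of the scaling $u\mapsto\sigma_u$, whose uniqueness and strict-maximum character come from $(W_4)$; everything else is bookkeeping with the embeddings of Remark \ref{keynta}.

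For part (1) the heart is the $C^1$ regularity of $\Phi_\lambda$ together with the derivative formula. I would first record that $u\mapsto\sigma_u$ is continuous on $S_\lambda$: this follows from the uniqueness in Lemma \ref{TMlem1}, the two-sided bounds of Lemma \ref{TMlem3}, and a routine limiting argument. Rather than differentiate $\sigma_u$ directly (which would demand $W\in C^2$), I would follow Szulkin--Weth and sandwich the increment. For $w\in S_\lambda$ near $u$, the maximality of $\sigma_u$ along $u$ and of $\sigma_w$ along $w$ give $I_\lambda(\sigma_u w)-I_\lambda(\sigma_u u)\le \Phi_\lambda(w)-\Phi_\lambda(u)\le I_\lambda(\sigma_w w)-I_\lambda(\sigma_w u)$. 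Applying the mean value theorem to the $C^1$ functional $I_\lambda$ on each side and letting $w\to u$, the continuity $\sigma_w\to\sigma_u$ forces both bounds to converge to $\sigma_u\langle I'_\lambda(m_\lambda(u)),v\rangle$ for a tangential direction $v\in\mathcal{T}_u(S_\lambda)$. Since $m_\lambda(u)\in\mathcal{N}_\lambda$ yields $\langle I'_\lambda(m_\lambda(u)),u\rangle=\sigma_u^{-1}\langle I'_\lambda(m_\lambda(u)),m_\lambda(u)\rangle=0$ and $\|m_\lambda(u)\|_{X^{\alpha,\lambda}}=\sigma_u$ for $u\in S_\lambda$, this is precisely the claimed identity.

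For part (2) the formula from (1) does the work. If $\{u_n\}\subset S_\lambda$ is Palais--Smale for $\Phi_\lambda$, then $I_\lambda(m_\lambda(u_n))=\Phi_\lambda(u_n)$ is bounded, and by (1) we have $\langle I'_\lambda(m_\lambda(u_n)),v\rangle=\sigma_{u_n}^{-1}\langle\Phi'_\lambda(u_n),v\rangle$ for $v\in\mathcal{T}_{u_n}(S_\lambda)$. Because $\sigma_{u_n}\ge\kappa>0$ by Lemma \ref{TMlem3}, the vanishing of $\Phi'_\lambda(u_n)$ on the tangent space forces the tangential part of $I'_\lambda(m_\lambda(u_n))$ to zero; since the normal component $\langle I'_\lambda(m_\lambda(u_n)),m_\lambda(u_n)\rangle$ vanishes identically on $\mathcal{N}_\lambda$, we conclude $I'_\lambda(m_\lambda(u_n))\to 0$, so $\{m_\lambda(u_n)\}$ is Palais--Smale for $I_\lambda$. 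The converse is the same computation read backwards through $m_\lambda^{-1}(u)=u/\|u\|_{X^{\alpha,\lambda}}$, where the boundedness hypothesis on $\{u_n\}\subset\mathcal{N}_\lambda$ keeps the scaling factor under control.

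Part (3) is then immediate: since $m_\lambda$ is a homeomorphism and $\Phi_\lambda=I_\lambda\circ m_\lambda$, we get $\inf_{S_\lambda}\Phi_\lambda=\inf_{\mathcal{N}_\lambda}I_\lambda=c_\lambda$, and the derivative formula shows $\Phi'_\lambda(u)=0$ iff $\langle I'_\lambda(m_\lambda(u)),v\rangle=0$ for all $v\in\mathcal{T}_u(S_\lambda)$; combined with $\langle I'_\lambda(m_\lambda(u)),m_\lambda(u)\rangle=0$ this gives $I'_\lambda(m_\lambda(u))=0$ in the full dual, so critical points and their values correspond. I expect the only genuine obstacle to be the $C^1$ claim in part (1): the naive route through the implicit function theorem is unavailable because $W$ is merely $C^1$, so the sandwich estimate above — which relies decisively on the strict monotonicity in $(W_4)$ to pin down a single continuously varying $\sigma_u$ — is the step that must be carried out with care, the remaining estimates being routine consequences of $(W_2)$, $(W_3)$ and Remark \ref{keynta}.
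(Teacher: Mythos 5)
Your proposal is correct and takes essentially the same route as the paper, which in fact offers no proof of this lemma at all but simply defers to the Nehari-manifold framework of Szulkin and Weth \cite{ASTW}; your argument is precisely the standard one from that reference (continuity of $u\mapsto\sigma_u$ via uniqueness, the closedness in Lemma \ref{TMlem2} and the bounds in Lemma \ref{TMlem3}; the two-sided maximality sandwich plus the mean value theorem for the derivative formula; and the identity $\|\Phi'_\lambda(u)\|=\|m_\lambda(u)\|_{X^{\alpha,\lambda}}\,\|I'_\lambda(m_\lambda(u))\|$, valid because $I'_\lambda(m_\lambda(u))$ annihilates $u$, for the Palais--Smale correspondence). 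No gaps.
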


Now, we investigate the minimizing sequence for $I_\lambda$.

\begin{lemma}\label{TMlem6}
Suppose that $(\mathcal{L}_1)-(\mathcal{L}_2)$, $(W_1)-(W_4)$ hold and $\lambda \geq 1$. If $\{u_n\} \subset \mathcal{N}_\lambda$ be a minimizing sequence for $I_\lambda$, then $\{u_n\}$ is bounded in $X^{\alpha, \lambda}$. 
\end{lemma}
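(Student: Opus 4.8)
The plan is to exploit the Ambrosetti--Rabinowitz condition $(W_1)$ exactly as in the proof of Lemma~\ref{TMlem4}, so that neither a compact embedding nor any coercivity of $L$ is required: the boundedness will follow from a pure energy estimate valid on $\mathcal{N}_\lambda$. First I would record that, since $\{u_n\}\subset\mathcal{N}_\lambda$, one has $\langle I'_\lambda(u_n),u_n\rangle=0$ for every $n$, so that I may subtract $\frac{1}{\theta}\langle I'_\lambda(u_n),u_n\rangle$ from $I_\lambda(u_n)$ without changing its value.

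Then, using (\ref{mt01}) and (\ref{mt03}), I would expand
$$
I_\lambda(u_n)=I_\lambda(u_n)-\frac{1}{\theta}\langle I'_\lambda(u_n),u_n\rangle
=\Bigl(\frac{1}{2}-\frac{1}{\theta}\Bigr)\|u_n\|_{X^{\alpha,\lambda}}^2
+\int_{\mathbb{R}}\Bigl(\frac{1}{\theta}(\nabla W(t,u_n),u_n)-W(t,u_n)\Bigr)\,dt.
$$
By $(W_1)$ we have $\theta W(t,u)\leq(\nabla W(t,u),u)$, i.e. $\frac{1}{\theta}(\nabla W(t,u),u)-W(t,u)\geq 0$ pointwise, so the integral term is nonnegative and may be discarded to obtain
$$
I_\lambda(u_n)\geq\Bigl(\frac{1}{2}-\frac{1}{\theta}\Bigr)\|u_n\|_{X^{\alpha,\lambda}}^2 .
$$
Since $\theta>2$, the constant $\frac{1}{2}-\frac{1}{\theta}$ is strictly positive.

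Finally, because $\{u_n\}$ is a minimizing sequence for $I_\lambda$ on $\mathcal{N}_\lambda$, the values $I_\lambda(u_n)$ converge to $c_\lambda$ and are in particular bounded above by some constant $C$ (one may also invoke (\ref{mt31}) for the explicit bound $\mathfrak{C}_0$ independent of $\lambda$). Combining the two displays gives $\|u_n\|_{X^{\alpha,\lambda}}^2\leq C/\bigl(\tfrac{1}{2}-\tfrac{1}{\theta}\bigr)$ for all $n$, which is the asserted boundedness. I do not anticipate a genuine obstacle here: the only points requiring care are the correct sign of the integrand, which is guaranteed by the superquadraticity encoded in $(W_1)$, and the finiteness of $c_\lambda$, already secured in Lemma~\ref{TMlem4} and Remark~\ref{TMnta1}. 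Notably, the estimate uses neither a compact embedding nor the positive definiteness of $L$, which is precisely why it remains valid under the weaker hypotheses $(\mathcal{L}_1)$--$(\mathcal{L}_2)$.
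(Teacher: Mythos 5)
Your proof is correct and follows exactly the paper's own argument: subtract $\frac{1}{\theta}\langle I'_\lambda(u_n),u_n\rangle=0$, discard the nonnegative integral guaranteed by $(W_1)$, and use the boundedness of $I_\lambda(u_n)$ along the minimizing sequence to bound $\|u_n\|_{X^{\alpha,\lambda}}^2$. No gaps.
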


\begin{proof}
Let $\{u_n\} \subset \mathcal{N}_\lambda$ such that 
$$
I_\lambda (u_n) \to c_\lambda,\;\;\mbox{as}\;\;n\to \infty.
$$
Then, by ($W_1$) and $\langle I'_\lambda(u_n), u_n \rangle =0$, we obtain
\begin{equation}\label{mt16}
\begin{aligned}
c_\lambda + o(1) &= \left( \frac{1}{2}-\frac{1}{\theta} \right)\|u_n\|_{X^{\alpha, \lambda}}^{2} + \int_{\mathbb{R}} \left(\frac{1}{\theta}( \nabla W(t,u_n(t)), u_n(t)) - W(t,u_n(t))  \right)dt\\
&\geq \left( \frac{1}{2}-\frac{1}{\theta} \right)\|u_n\|_{X^{\alpha, \lambda}}^{2}.
\end{aligned}
\end{equation}
Therefore, (\ref{mt16}) implies that $\{u_n\}$ is bounded in $X^{\alpha, \lambda}$.
\end{proof}

Following the same way of Lemmas $2.1$ and $3.3$ in \cite{Torres17}, we can show the following version of the Lions concentration compactness principle.

\begin{lemma}\label{TMlemCC}
Let $r>0$ and $q\geq 2$. Let $\{u_n\} \in X^{\alpha, \lambda}$ be bounded. If
$$
\lim_{n\to \infty} \sup_{y\in \mathbb{R}} \int_{(y-r, y+r)} |u_n(t)|^qdt =0,
$$
then $u_n \to 0$ in $L^p(\mathbb{R}, \mathbb{R}^n)$ for any $p>2$.
\end{lemma}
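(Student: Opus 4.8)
The plan is to upgrade the local $L^q$-smallness hypothesis to a global $L^\infty$-smallness statement, namely $\|u_n\|_\infty\to 0$, and then to conclude by a one-line interpolation. The decisive structural fact I would exploit is that $\alpha>1/2$, which makes $H^\alpha(\mathbb{R})$ embed not merely into $C(\mathbb{R})$ (Lemma \ref{Lem:LinftyContH}) but into the Hölder space $C^{0,\alpha-1/2}(\mathbb{R})$; the associated seminorm bound $[u]_{C^{0,\alpha-1/2}}\leq C\|u\|_\alpha$ provides a modulus of continuity that is \emph{uniform} along the sequence, and it is exactly this equicontinuity that lets local mass-smallness propagate to pointwise smallness.

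First I would record the free bounds. By Lemma \ref{Lem:XcontH} the boundedness of $\{u_n\}$ in $X^{\alpha,\lambda}$ transfers to boundedness in $H^\alpha$, hence to a uniform bound $\|u_n\|_{L^2}\leq B$ and, through the fractional Morrey embedding above, to a uniform Hölder estimate $|u_n(x)-u_n(y)|\leq K|x-y|^{\alpha-1/2}$ for all $x,y\in\mathbb{R}$ and all $n$. Since each $u_n$ lies in $H^\alpha$ it is continuous and vanishes at infinity, so $\|u_n\|_\infty$ is (essentially) attained at some finite point.

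The heart of the argument is the claim $\|u_n\|_\infty\to 0$, which I would prove by contradiction. Suppose along a subsequence $\|u_n\|_\infty\geq\delta>0$, and pick $x_n$ with $|u_n(x_n)|\geq\delta/2$. The uniform Hölder bound forces $|u_n(t)|\geq\delta/4$ on the whole interval $(x_n-\rho,x_n+\rho)$, where $\rho:=\min\{r,(\delta/(4K))^{1/(\alpha-1/2)}\}$ is a fixed positive radius independent of $n$. Consequently
$$
\int_{(x_n-r,x_n+r)}|u_n(t)|^q\,dt\;\geq\;\int_{x_n-\rho}^{x_n+\rho}|u_n(t)|^q\,dt\;\geq\;2\rho\,(\delta/4)^q\;>\;0,
$$
a fixed positive lower bound that contradicts $\sup_{y\in\mathbb{R}}\int_{(y-r,y+r)}|u_n|^q\,dt\to 0$. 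Hence $\|u_n\|_\infty\to 0$.

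Finally, for any $p>2$ I would interpolate against the uniform $L^2$ bound:
$$
\int_{\mathbb{R}}|u_n(t)|^p\,dt\;\leq\;\|u_n\|_\infty^{\,p-2}\int_{\mathbb{R}}|u_n(t)|^2\,dt\;\leq\;B^2\,\|u_n\|_\infty^{\,p-2}\;\longrightarrow\;0,
$$
which is exactly $u_n\to 0$ in $L^p(\mathbb{R},\mathbb{R}^n)$. I expect the only genuine obstacle to be the uniform-vanishing claim: everything rests on having an equicontinuity/uniform-modulus estimate, and this is precisely where $\alpha>1/2$ is indispensable. Should one prefer to avoid invoking the Hölder embedding, the same conclusion follows from the classical Lions covering scheme: cover $\mathbb{R}$ by intervals $I_k=(kr-r,kr+r)$ of overlap at most two, estimate $\int_{I_k}|u_n|^p\leq C\|u_n\|_{L^2(I_k)}^2\,\|u_n\|_{H^\alpha(I_k)}^{p-2}$ by interpolation together with the (uniform) local embedding $H^\alpha(I_k)\hookrightarrow L^\infty(I_k)$, and sum using $\sum_k[u_n]_{H^\alpha(I_k)}^2\lesssim[u_n]_{H^\alpha(\mathbb{R})}^2$ and a Hölder splitting of the resulting series; in that route the sub-additivity of the nonlocal Gagliardo seminorm over the cover is the technical crux.
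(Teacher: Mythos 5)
Your argument is correct, and it reaches the conclusion by a route that differs from the one the paper relies on (the paper gives no proof here, deferring to Lemmas 2.1 and 3.3 of \cite{Torres17}, which follow the classical Lions scheme: cover $\mathbb{R}$ by intervals of bounded overlap, apply a local interpolation inequality of the form $\int_{I}|u|^p\leq C\bigl(\int_I|u|^q\bigr)^{\mu}\|u\|_{H^\alpha(I)}^{p-\mu q}$ on each piece, and sum). Your primary route instead upgrades the hypothesis to $\|u_n\|_\infty\to 0$ via the Morrey-type embedding $H^\alpha(\mathbb{R})\hookrightarrow C^{0,\alpha-1/2}(\mathbb{R})$, valid precisely because $\alpha>1/2$ in one dimension: the uniform H\"older seminorm bound along the bounded sequence (boundedness in $H^\alpha$ following from Lemma \ref{Lem:XcontH}) yields a radius $\rho>0$ independent of $n$ on which a putative large value $|u_n(x_n)|\geq\delta/2$ persists, producing a fixed positive lower bound for $\int_{(x_n-r,x_n+r)}|u_n|^q$ that contradicts the vanishing hypothesis; the interpolation $\|u_n\|_{L^p}^p\leq\|u_n\|_\infty^{p-2}\|u_n\|_{L^2}^2$ then finishes. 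What your approach buys is simplicity and the avoidance of the delicate subadditivity of the nonlocal Gagliardo seminorm over a cover; what it costs is generality, since it is tied to the supercritical regime $2\alpha>1$ in dimension one (it actually proves the stronger statement $\|u_n\|_\infty\to 0$), whereas the covering argument survives in subcritical settings where no $L^\infty$ embedding is available. The only point you should make explicit is that the H\"older embedding constant is controlled by the $H^\alpha$ norm as defined in the paper via the Fourier transform, which is standard by the equivalence with the Gagliardo seminorm for $0<\alpha<1$; with that noted, the proof is complete.
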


\begin{lemma}\label{TMlem5}
Under the assumptions of Theorem \ref{Thm:MainTheorem1}, if $\{u_n\} \subset \mathcal{N}_\lambda$ be a sequence such that 
\begin{equation}\label{mt17}
I_\lambda(u_n) \to c_\lambda\;\;\mbox{and}\;\;I'_\lambda(u_n) \to 0,
\end{equation} 
then there exists $\Lambda >0$ such that $\{u_n\}$ has a convergent subsequence in $X^{\alpha, \lambda}$ for all $\lambda > \Lambda$.
\end{lemma}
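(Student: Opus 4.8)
The plan is to prove the Palais–Smale-type condition stated in Lemma \ref{TMlem5} by the standard concentration-compactness route adapted to this semi-definite setting. Starting from a sequence $\{u_n\}\subset\mathcal{N}_\lambda$ with $I_\lambda(u_n)\to c_\lambda$ and $I'_\lambda(u_n)\to 0$, the first step is boundedness: since $u_n\in\mathcal{N}_\lambda$ we have $\langle I'_\lambda(u_n),u_n\rangle=0$, so exactly as in Lemma \ref{TMlem6} the combination $I_\lambda(u_n)-\tfrac{1}{\theta}\langle I'_\lambda(u_n),u_n\rangle$ together with $(W_1)$ gives $(\tfrac12-\tfrac1\theta)\|u_n\|_{X^{\alpha,\lambda}}^2\le c_\lambda+o(1)$, so $\{u_n\}$ is bounded in $X^{\alpha,\lambda}$. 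By reflexivity (passing to a subsequence) I would extract a weak limit $u_n\rightharpoonup u$ in $X^{\alpha,\lambda}$, with $u_n\to u$ in $C[0,T]$ on the bounded piece and $u_n(t)\to u(t)$ a.e. on $\mathbb{R}$.

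The heart of the argument is to upgrade weak convergence to strong convergence, and this is where the parameter $\lambda$ and the threshold $\Lambda$ enter. The key quantity is the nonlinear term $\int_\mathbb{R}(\nabla W(t,u_n),u_n-u)\,dt$; if this tends to $0$, then from $I'_\lambda(u_n)(u_n-u)\to 0$ and the weak convergence one reads off $\|u_n-u\|_{X^{\alpha,\lambda}}\to 0$, since the quadratic form $\langle\cdot,\cdot\rangle_{X^{\alpha,\lambda}}$ is exactly the norm. To control the nonlinear term I would first rule out vanishing: if the sequence were to vanish in the sense of Lemma \ref{TMlemCC} (the local $L^q$ supremum over translated intervals going to zero), then $u_n\to 0$ in every $L^p(\mathbb{R})$ with $p>2$, and using the growth bound (\ref{mt13}) together with $(W_2)$ one would obtain $\int_\mathbb{R}(\nabla W(t,u_n),u_n)\,dt\to 0$, forcing $\|u_n\|_{X^{\alpha,\lambda}}\to 0$ and hence $c_\lambda=0$, contradicting Lemma \ref{TMlem4} which gives $c_\lambda\ge\rho>0$ with $\rho$ independent of $\lambda$. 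So vanishing cannot occur, and the mass stays concentrated.

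With vanishing excluded, the remaining step is to estimate the part of $\{u_n\}$ living outside the region where $L$ degenerates. Here I would exploit $(\mathcal{L})_1$ quantitatively: on the complement of the degeneracy set, $(L(t)u,u)\ge l(t)|u|^2$ with $l(t)\ge c$ off $\{l<c\}$, so the $X^{\alpha,\lambda}$-norm controls a genuinely weighted $L^2$-mass with weight proportional to $\lambda$. The decomposition $\int_\mathbb{R}(\nabla W(t,u_n),u_n-u)\,dt=\int_{\{l<c\}}+\int_{\{l\ge c\}}$ is then handled as follows: on $\{l<c\}$, which has small measure by $(\mathcal{L})_1$, the uniform $C$-bound and a.e. convergence give control via the growth estimate (\ref{mt13}) and dominated convergence; on $\{l\ge c\}$ the weighted norm together with $\lambda$ large makes the corresponding $L^2$-contribution of the tail uniformly small, which is precisely where the threshold $\Lambda$ must be chosen. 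Choosing $\lambda>\Lambda$ large enough that the tail estimate (\ref{15}) beats the growth constant $K_2$ of (\ref{mt13}), I would conclude $\int_\mathbb{R}(\nabla W(t,u_n),u_n-u)\,dt\to 0$, and therefore $\|u_n-u\|_{X^{\alpha,\lambda}}\to 0$.

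\textbf{The main obstacle} I anticipate is exactly the tail control over $\{l\ge c\}$ combined with passing to the limit in the nonlinearity on all of $\mathbb{R}$: because there is no compact embedding of $X^{\alpha,\lambda}$ into $L^2(\mathbb{R})$ (the loss of coercivity is the whole point of $(\mathcal{L})_1$–$(\mathcal{L})_3$), one cannot simply invoke Rellich-type compactness, and the a.e. convergence alone does not control the integral of $(\nabla W(t,u_n),u_n)$ at infinity. The device that saves the argument is that $\lambda$ can be taken arbitrarily large: making the weight in the $X^{\alpha,\lambda}$-norm heavy off the degeneracy set forces the tails of any bounded sequence to be small in $L^2$ uniformly in $n$, and it is this $\lambda$-dependent smallness — rather than compactness — that plays the role usually played by the coercive condition (L). Quantifying this smallness and balancing it against the nonlinear growth is the delicate estimate that determines $\Lambda$.
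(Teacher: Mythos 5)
Your opening steps (boundedness via $(W_1)$ and the Nehari constraint, extraction of a weak limit, and the exclusion of vanishing via Lemma \ref{TMlemCC} together with the lower bound $c_\lambda\ge\rho>0$) coincide with the paper's. The divergence, and the genuine gap, lies in how you try to pass from weak to strong convergence. Your tail estimate over $\{l\ge c\}$ only yields $\int_{\{l\ge c\}}|u_n|\,|u_n-u|\,dt\le C/(\lambda c)$ \emph{uniformly in $n$}: this is small when $\lambda$ is large, but for a \emph{fixed} $\lambda>\Lambda$ it does not tend to $0$ as $n\to\infty$. Hence you can only conclude $\limsup_n\|u_n-u\|_{X^{\alpha,\lambda}}^2\le C/\lambda$, which is not convergence of any subsequence, and no choice of threshold repairs this since the lemma demands convergence at each fixed $\lambda>\Lambda$. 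Relatedly, the assertion that once vanishing is excluded ``the mass stays concentrated'' is unjustified: non-vanishing only produces intervals $(t_n-r,t_n+r)$ carrying mass at least $\beta$, and these may escape to infinity. That escaping scenario is exactly where the paper uses $\lambda$ large: if $u=0$ and the sequence is non-vanishing, then $|t_n|\to\infty$, the portion of $(t_n-r,t_n+r)$ meeting $\{l<c\}$ has measure tending to $0$, and combining (\ref{mt19}), (\ref{mt31}) and (\ref{mt16}) gives $\lambda c\beta\le\frac{2\theta}{\theta-2}\mathfrak{C}_0$, which fails for $\lambda>\Lambda_*$. This is the actual origin of $\Lambda$, and its only role is to force $u\neq0$.

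For the final step the paper uses a device your outline does not contain and cannot be replaced by your tail estimate: knowing $u\neq0$, it takes $\sigma\in(0,1]$ with $\sigma u\in\mathcal{N}_\lambda$, uses $(W_4)$ to show that $s\mapsto\frac1\theta(\nabla W(t,su),su)-W(t,su)$ is non-decreasing (and nonnegative by $(W_1)$), and then squeezes $c_\lambda\le I_\lambda(\sigma u)\le\bigl(\frac12-\frac1\theta\bigr)\|u\|_{X^{\alpha,\lambda}}^2+\int_{\mathbb{R}}\bigl(\frac1\theta(\nabla W(t,u),u)-W(t,u)\bigr)dt\le\liminf_n(\cdots)=c_\lambda$ via weak lower semicontinuity of the norm and Fatou's lemma. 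Equality throughout forces $\|u_n\|_{X^{\alpha,\lambda}}\to\|u\|_{X^{\alpha,\lambda}}$, and weak convergence plus norm convergence in the Hilbert space gives strong convergence --- no control of $\int_{\mathbb{R}}(\nabla W(t,u_n),u_n-u)\,dt$ is ever needed. Note that your proposal never invokes $(W_4)$, which is a hypothesis of the theorem precisely because it powers this monotonicity--Fatou argument; its absence from your plan is a symptom of the gap.
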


\begin{proof}
By (\ref{mt16}) and (\ref{mt17}) we deduce that $\{u_n\}$ is bounded in $X^{\alpha, \lambda}$. Since $X^{\alpha, \lambda}$ is a reflexive space, there is a subsequence still called $\{u_n\} \in X^{\alpha, \lambda}$ and $u\in X^{\alpha, \lambda}$ such that $u_n \rightharpoonup u$. Furthermore, by Remark \ref{keynta} and Sobolev Theorem 
$$
u_n \to u \;\;\mbox{in}\;\;L_{loc}^p(\mathbb{R}) \;\;\mbox{for}\;\;p\in [2,\infty],
$$
and, we have either $\{u_n\}$ is vanishing, namely
\begin{equation}\label{mt18}
\lim_{n\to \infty} \sup_{t\in \mathbb{R}}\int_{(t-r,t+r)} |u_n(s)|^2ds=0
\end{equation}
or non-vanishing, namely, there exists $r, \beta >0$ and a sequence $\{t_n\} \subset \mathbb{R}$ such that
\begin{equation}\label{mt19}
\lim_{n\to \infty} \int_{(t_n-r,t_n+r)} |u_n(s)|^2ds\geq \beta.
\end{equation}  
We claim that $u\neq 0$. By contradiction, we suppose that $u=0$. If $\{u_n\}$ is vanishing, by Lemma \ref{TMlemCC}, $u_n \to 0$ in $L^p(\mathbb{R}, \mathbb{R}^n)$ for $p>2$. So, following the ideas of the proof of Lemma \ref{TMlem2} , we deduce that 
\begin{equation}\label{mt20}
\int_{\mathbb{R}} ( \nabla W(t,u_n(t)), u_n(t)) dt \to 0.
\end{equation}
Therefore, by (\ref{mt20}) and $\langle I'_\lambda (u_n) , u_n \rangle = 0$, we obtain that 
$$
\|u_n\|_{X^{\alpha, \lambda}} \to 0\;\;\mbox{as}\;\;n\to \infty.
$$
This contradicts the conclusion of Lemma \ref{TMlem2}.

On the other hand, by (\ref{mt31}) and (\ref{mt16}) we have
\begin{equation}\label{mt38}
\limsup_{n\to \infty} \|u_n\|_{X^{\alpha, \lambda}}^2 \leq \frac{2\theta}{\theta - 2} \mathfrak{C}_0.
\end{equation}
Therefore, if $\{u_n\}$ is non vanishing, then (\ref{mt19}) implies that $|t_n| \to \infty$ as $n\to \infty$. Then
$$|(t_n-r,t_n+r) \cap \{t\in \mathbb{R}:\;\;l(t)<c\}| \to 0\;\;\mbox{as}\;\;n\to \infty.$$ 
So, by H\"older inequality, we obtain
\begin{equation}\label{mt39}
\int_{(t_n-r, t_n+t) \cap \{l<c\}}u_n^2dt \to 0.
\end{equation} 
Combining (\ref{mt19}), (\ref{mt38}) and (\ref{mt39}), one has that
\begin{eqnarray}\label{mt40}
\frac{2\theta}{\theta-2} \mathfrak{C}_0 &\geq& \limsup_{n\to \infty} \|u_n\|_{X^{\alpha, \lambda}}^{2} \geq \lambda c \limsup_{n\to \infty} \int_{(t_n-r,t_n+r)\cap \{l\geq c\}} u_n^2(t)dt\nonumber\\
& = & \lambda c \limsup_{n\to \infty} \left( \int_{(t_n-r,t_n+r)} u_n^2(t)dt - \int_{(t_n-r,t_n+r)\cap \{l<c\}} u_{n}^{2}(t)dt \right)\nonumber\\
&\geq& \lambda c \beta.
\end{eqnarray}
Let $\Lambda_* = \max\{\frac{1}{cC_{\infty}^{2}meas\{l< c\}}, \frac{2\theta \mathfrak{C}_0}{(\theta-2)c\beta}\}$, then we obtain that $\lambda > \Lambda_* > \frac{2\theta\mathfrak{C}_0}{(\theta-2)c\beta}$, wich contradicts with (\ref{mt40}) .

To conclude, we need to prove that $u_k\to u$ in $X^{\alpha, \lambda}$. First, we note that the function $\frac{1}{\theta}( \nabla W(t,su), su) - W(t,su)$ is non-decreasing for $s>0$. In fact, let $0<s_1<s_2$, then we have
$$
\begin{aligned}
&(\nabla W(t,s_1u), s_1u) - \theta W(t, s_1u) \\
&= (\nabla W(t,s_1u), s_1u)+ \theta W(t, s_2u) - \theta W(t, s_2u)- \theta W(t, s_1u) \\
& = ( \nabla W(t,s_1u), s_1u) - \theta W(t, s_2u) + \theta \int_{s_1}^{s_2} (\nabla W(t, ru), u) dr\\
&\leq ( \nabla W(t,s_1u), s_1u) - \theta W(t, s_2u) + \frac{( \nabla W(t, s_2u), u)}{s_2^{\theta - 1}}(s_{2}^{\theta} - s_{1}^{\theta})\\
&\leq ( \nabla W(t,s_1u), s_1u) - \theta W(t, s_2u) + s_2( \nabla W(t, s_2u), u) - s_1( \nabla W(t,s_1u, u))\\
& = ( \nabla W(t,s_2u), s_2u) - \theta W(t, s_2u)
\end{aligned}
$$
Finally, since $u\neq 0$ and Lemma \ref{TMlem1} there exists $\sigma \in (0,1]$ such that $\sigma u \in \mathcal{N}_\lambda$, then by Fatou's Lemma , it is easy to check that
\begin{align*}
c_{\lambda} \leq & I_{\lambda}(\sigma u) = I_{\lambda}(\sigma u) - \frac{1}{\theta} I'_{\lambda}(\sigma u) \sigma u \\
&= \sigma^2\left(\frac{1}{2}-\frac{1}{\theta} \right)\|u\|_{X^{\alpha, \lambda}}^{2}+\int_{\mathbb{R}}(\frac{1}{\theta}( \nabla W(t,\sigma u(t)), \sigma u(t) ) - W(t,\sigma u(t)))dt\\
&\leq \left(\frac{1}{2}-\frac{1}{\theta} \right)\|u\|_{X^{\alpha, \lambda}}^{2}+\int_{\mathbb{R}}(\frac{1}{\theta}(\nabla W(t,u(t)), u(t) ) - W(t, u(t)))dt\\
&\leq  \liminf_{n \to \infty}\left\{ \left(\frac{1}{2}-\frac{1}{\theta} \right)\|u_n\|_{X^{\alpha, \lambda}}^{2}+\int_{\mathbb{R}}(\frac{1}{\theta}( \nabla W(t,u_n(t)), u_n(t) ) - W(t,u_n(t)))\right\}\\
&\leq  \limsup_{k \to \infty}\left\{ \left(\frac{1}{2}-\frac{1}{\theta} \right)\|u_n\|_{X^{\alpha, \lambda}}^{2}+\int_{\mathbb{R}}(\frac{1}{\theta}(\nabla W(t,u_n(t)), u_n(t) ) - W(t,u_n(t)))\right\}\\
&=  \lim_{n \to \infty} \left\{ I_{\lambda}(u_{n}) - \frac{1}{\theta} I'_{\lambda}(u_{n}) u_{n} \right\} = \, c_{\lambda} .\\
\end{align*}
Hence,
$$
\|u_n\|_{X^{\alpha, \lambda}}^2 \to \|u\|_{X^{\alpha, \lambda}}^2 \quad \mbox{in} \quad  \mathbb{R},
$$
from where it follows that $ u_{n} \to u $ in $X^{\alpha, \lambda}$.
\end{proof}

\section{Proof of Theorem \ref{Thm:MainTheorem2}}

In the following, we study the concentration of solutions for problem $(\mbox{FHS})_{\lambda}$ as $\lambda \to \infty$. Firstly, for technical reason we consider $T = [0, L]$ and the following fractional boundary value problem
\begin{equation}\label{eqn:BVP}
\left\{
  \begin{array}{ll}
   {_{t}}D_{L}^{\alpha} {_{0}}D_{t}^{\alpha}u  = \nabla W(t, u),\quad t\in (0, L),\\[0.1cm]
    u(0) = u(L) = 0.
  \end{array}
\right.
\end{equation}
Associated to (\ref{eqn:BVP}) we have the functional $I: E_{0}^{\alpha} \to \mathbb{R}$ given by
$$
I(u):= \frac{1}{2}\int_{0}^{T} |{_{0}}D_{t}^{\alpha}u(t)|^2dt - \int_{0}^{T}F(t,u(t))dt
$$
and we have that $I\in C^1(E_{0}^{\alpha}, \mathbb{R})$ with 
$$
I'(u)v = \int_{0}^{T} \langle {_{0}}D_{t}^{\alpha}u(t), {_{0}}D_{t}^{\alpha}v(t)\rangle dt - \int_{0}^{T}\langle\nabla W(t,u(t)), v(t) \rangle.
$$
The Nehari manifold corresponding to $I$ is defined by 
$$
\tilde{\mathcal{N}} = \{u\in E_{0}^{\alpha}\setminus \{0\}: \;\;I'(u)u = 0\},
$$  
and let 
$$
\tilde{c} = \inf_{u\in \tilde{\mathcal{N}}}I(u).
$$
Furthermore, we can show that
$$\tilde{c} =  \inf_{w\in E_{0}^{\alpha}} \max_{\sigma>0}I(\sigma w) = \inf_{u\in \tilde{S} \setminus \{0\}} \max_{\sigma >0}I(\sigma u),$$
and if we follow the ideas of the proof of Theorem \ref{Thm:MainTheorem1}, we can get the following existence result
\begin{Thm}\label{Gthm1}
Suppose that $W$ satisfies $(W_1)-(W_4)$ with $t\in [0,L]$, then (\ref{eqn:BVP}) has a ground state solution.
\end{Thm}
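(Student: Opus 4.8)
The plan is to mirror the Nehari-manifold argument developed for Theorem~\ref{Thm:MainTheorem1}, but to exploit the decisive simplification available on the bounded interval $[0,L]$: by Proposition~\ref{FC-FEprop4} the embedding $E_0^\alpha\hookrightarrow C[0,L]$ is compact, so weak convergence in $E_0^\alpha$ already forces uniform convergence. This is precisely the ingredient that was missing on the whole line (and which forced the concentration-compactness analysis and the restriction $\lambda>\Lambda_*$ in the Palais--Smale step); here it is available for free, so the compactness will hold without any such restriction.

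First I would set up the fibering structure exactly as in Lemma~\ref{TMlem1}: for fixed $u\in E_0^\alpha\setminus\{0\}$ put $h(\sigma)=I(\sigma u)$ and use $(W_2)$ to get $h(\sigma)>0$ for small $\sigma$, (\ref{eqif}) to get $h(\sigma)\to-\infty$ as $\sigma\to\infty$, and the strict monotonicity in $(W_4)$ to guarantee a unique maximizer $\sigma_u>0$ with $\sigma_u u\in\tilde{\mathcal N}$. Together with the Ambrosetti--Rabinowitz condition $(W_1)$ this yields, for $u\in\tilde{\mathcal N}$,
$$
I(u)=I(u)-\tfrac1\theta I'(u)u=\Bigl(\tfrac12-\tfrac1\theta\Bigr)\|u\|_\alpha^2+\int_0^L\Bigl(\tfrac1\theta(\nabla W(t,u),u)-W(t,u)\Bigr)\,dt\ge\Bigl(\tfrac12-\tfrac1\theta\Bigr)\|u\|_\alpha^2,
$$
so $\tilde{\mathcal N}$ is bounded away from $0$ (as in Lemma~\ref{TMlem2}), $\tilde c>0$ (as in Lemma~\ref{TMlem4}), and any minimizing sequence $\{u_n\}\subset\tilde{\mathcal N}$ for $I$ is bounded in $E_0^\alpha$ (as in Lemma~\ref{TMlem6}).

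The core step is to extract the minimizer. Since $E_0^\alpha$ is reflexive, pass to a subsequence with $u_n\rightharpoonup u$ weakly in $E_0^\alpha$; by Proposition~\ref{FC-FEprop4}, $u_n\to u$ uniformly on $[0,L]$. Using $(W_2)$--$(W_3)$ and this uniform convergence, the nonlinear terms $\int_0^L W(t,u_n)\,dt$ and $\int_0^L(\nabla W(t,u_n),u_n)\,dt$ converge to the corresponding integrals for $u$; combined with $u_n\in\tilde{\mathcal N}$ and the lower bound above, this shows $u\ne0$, since otherwise $\|u_n\|_\alpha\to0$, contradicting that $\tilde{\mathcal N}$ is bounded away from $0$. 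Hence $u\ne0$, so by the fibering lemma there is a unique $\sigma\in(0,1]$ with $\sigma u\in\tilde{\mathcal N}$; using weak lower semicontinuity of the norm, the monotonicity of $s\mapsto\tfrac1\theta(\nabla W(t,su),su)-W(t,su)$ and Fatou's lemma exactly as in the closing estimate of the convergence argument for $(\mbox{FHS})_\lambda$, one obtains $\tilde c\le I(\sigma u)\le\liminf_n I(u_n)=\tilde c$, which forces $\sigma=1$ and $\|u_n\|_\alpha\to\|u\|_\alpha$; together with weak convergence in the Hilbert space $E_0^\alpha$ this gives $u_n\to u$ strongly and $I(u)=\tilde c$.

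The main obstacle encountered in Theorem~\ref{Thm:MainTheorem1} — verifying the Palais--Smale condition — essentially disappears here owing to the compact embedding of Proposition~\ref{FC-FEprop4}; the only points requiring genuine care are showing the weak limit is nonzero and confirming that the constrained minimizer is an unconstrained critical point. For the latter, the standard fact that a minimizer of $I$ on the Nehari manifold is a free critical point of $I$ (via the fibering/homeomorphism argument already used for $I_\lambda$) shows that $u$ is a nontrivial critical point of $I$, hence a ground state solution of (\ref{eqn:BVP}).
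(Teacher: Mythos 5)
Your proposal is correct and follows essentially the same route as the paper, which proves Theorem \ref{Gthm1} only by the remark that one should ``follow the ideas of the proof of Theorem \ref{Thm:MainTheorem1}''; you in fact supply more detail than the paper does, and your key observation --- that the compact embedding of Proposition \ref{FC-FEprop4} yields uniform convergence of minimizing sequences, replacing the vanishing/non-vanishing dichotomy and removing any need for a threshold on a parameter --- is exactly the simplification that makes the transplanted Nehari argument work on $[0,L]$. The only step you state as tersely as the paper's own Lemma \ref{TMlem5} is why the fibering parameter satisfies $\sigma\le 1$ (it follows from $\|u\|_\alpha^2\le\liminf_n\|u_n\|_\alpha^2=\int_0^L(\nabla W(t,u(t)),u(t))\,dt$, i.e.\ $I'(u)u\le 0$, combined with $(W_4)$ and the shape of $h(\sigma)=I(\sigma u)$), so no genuine gap remains.
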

Furthermore, under the assumptions $(\mathcal{L})_1-(\mathcal{L})_3$ and $(W_1)-(W_2)$, we can get that
$$
c_\lambda \leq \tilde{c}\;\;\mbox{for}\;\;\lambda >0.
$$
In fact, by Theorem \ref{Gthm1}, let $\tilde{u} \in E_{0}^{\alpha}$ be a ground state solution of (\ref{eqn:BVP}), then $\tilde{c} = I(\tilde{u})$. Therefore,
$$
c_\lambda \leq \max_{\sigma >0} I_\lambda(\sigma \tilde{u}) = \max_{\sigma >0} I(\sigma \tilde{u}) = I(\tilde{u}) = \tilde{c}\;\;\mbox{for all}\;\;\lambda >0.
$$

\noindent
\begin{proof}{\bf Theorem \ref{Thm:MainTheorem2}} We follow the argument in \cite{ZhangTorres}.
For any sequence $\lambda_k \to \infty$, let $u_k = u_{\lambda_k}$ be the critical point of $I_{\lambda_k}$, namely
$$
c_{\lambda_k} = I_{\lambda_k}(u_k)\quad \mbox{and}\quad I'_{\lambda_k}(u_k)=0,
$$
and by ($W_1$) we get
$$
\begin{aligned}
c_{\lambda_k} &= I_{\lambda_k}(u_k) = I_{\lambda_k}(u_k) - \frac{1}{\theta} I'_{\lambda_k}(u_k)u_k\\
&=\left( \frac{1}{2} - \frac{1}{\theta} \right)\|u_k\|_{X^{\alpha , \lambda_k}}^{2} + \int_{\mathbb{R}} \left[\frac{1}{\theta}( \nabla W(t,u_k(t)), u_k(t)) - W(t,u_k(t))\right]dt\\
&\geq \left( \frac{1}{2} - \frac{1}{\theta} \right)\|u_k\|_{X^{\alpha ,\lambda_k}}^{2}
\end{aligned}
$$
Therefore, by (\ref{mt31})
\begin{equation}\label{esti}
\sup_{k\geq 1} \|u_k\|_{X^{\alpha, \lambda_k}}^{2}\leq \frac{2\theta}{\theta - 2}\mathfrak{C}_0,
\end{equation}
where $\mathfrak{C}_0$ is independent of $\lambda_k$. Therefore, we may assume that $u_k \rightharpoonup \tilde{u}$ weakly in $X^{\alpha,\lambda_k}$.
Moreover, by Fatou's lemma, we have
$$
\begin{aligned}
\int_{\mathbb{R}} l(t) |\tilde{u}(t)|^2dt \leq& \liminf_{k\to \infty} \int_{\mathbb{R}} l(t)|u_k(t)|^2dt\\
\leq& \liminf_{k\to \infty} \int_{\mathbb{R}} (L(t)u_k(t), u_{k}(t)) dt\\
\leq& \liminf_{k\to \infty} \frac{\|u_k\|_{X^{\alpha, \lambda_k}}^2}{\lambda_{k}} = 0.
\end{aligned}
$$
Thus, $\tilde{u} = 0$ a.e. in $\mathbb{R} \setminus J$. Now, for any $\varphi \in C_{0}^{\infty}(T, \mathbb{R}^n)$, since $I'_{\lambda_k}(u_k)\varphi=0$, it is 
easy to see that
$$
\int_{0}^{L} ({_{0}}D_{t}^{\alpha} \tilde{u}(t), {_{0}}D_{t}^{\alpha}\varphi(t)) dt - \int_{0}^{L} (\nabla W(t,\tilde{u}(t)), \varphi (t)) dt=0,
$$
that is, $\tilde{u}$ is a solution of (\ref{eqn:BVP}) by the density of $C_{0}^{\infty}(T, \mathbb{R}^n)$ in $E^{\alpha}$.

Next, we show that $u_k \to \tilde{u}$ strongly in $L^{r}(\mathbb{R})$ for
$2\leq r < \infty$. Otherwise, by Lemma \ref{TMlemCC}, there exist $\delta>0$, $R_0>0$ and $t_k\in \R$ such that
$$
\int_{t_k-R_0}^{t_k+R_0}(u_k-\tilde{u})^2dt\geq \delta.
$$
Moreover, $t_n\to \infty$, hence $meas\{(t_k-R_0,t_k+R_0)\cap \{l<c\}\}\to 0$. By the H\"{o}lder inequality, we have
$$
\int_{(t_k-R_0,t_k+R_0)\cap \{l<c\}}|u_k-\tilde{u}|^2dt\leq meas \{(t_k-R_0,t_k+R_0)\cap \{l<c\}\}\|u_k-\tilde{u}\|_\infty\to 0.
$$
Consequently,
$$
\begin{aligned}
\|u_k\|_{X^{\alpha,\lambda_k}}^2&\geq \lambda_k  c\int_{(t_k-R_0,t_k+R_0)\cap \{l\geq c\}} |u_k(t)|^2 dt\\
&=\lambda_k c \int_{(t_k-R_0,t_k+R_0)\cap \{l\geq c\}} |u_k(t)-\tilde{u}(t)|^2 dt\\
&= \lambda_k c \Bigl(\int_{(t_k-R_0,t_k+R_0)}|u_k(t)-\tilde{u}(t)|^2 dt-\int_{(t_k-R_0,t_k+R_0)\cap \{l<c\}}|u_k-\tilde{u}|^2dt\Bigr)+o(1)\\
&\to \infty,
\end{aligned}
$$
which contradicts (\ref{esti}).

Now we show that $u_k \to \tilde{u}$ in $X^{\alpha}$. Since $I'_{\lambda_k}(u_k) u_k=I'_{\lambda_k}(u_k)\tilde{u}=0$, we have
\begin{equation}\label{c6}
\|u_k\|_{X^{\alpha, \lambda_k}}^{2} = \int_{\mathbb{R}} (\nabla W(t, u_k(t)), u_k(t)) dt
\end{equation}
and
\begin{equation}\label{c7}
\langle u_k, \tilde{u} \rangle_{\lambda_k} = \int_{\mathbb{R}} (\nabla W(t, u_k(t)), \tilde{u}(t)) dt,
\end{equation}
which implies that
$$
\lim_{k\to \infty} \|u_{k}\|_{X^{\alpha, \lambda_k}}^{2} = \lim_{k\to \infty} \langle u_k, \tilde{u} \rangle_{X^{\alpha, \lambda_k}} = \lim_{k\to \infty} \langle u_k, \tilde{u} \rangle_{X^{\alpha}} = \|\tilde{u}\|_{X^\alpha}^2.
$$
Furthermore, by the weakly semi-continuity of norms we obtain
$$
\|\tilde{u}\|_{X^{\alpha}}^{2} \leq \liminf_{k \to \infty} \|u_k\|_{X^{\alpha}}^{2} \leq \limsup_{k\to \infty}\|u_k\|_{X^\alpha}^{2} \leq \lim_{k\to \infty}\|u_k\|_{X^{\alpha,\lambda_k}}^{2}.
$$
So $u_k \to \tilde{u}$ in $X^{\alpha}$, and $u_k \to \tilde{u}$ in $H^{\alpha}(\mathbb{R}, \mathbb{R}^n)$ as $k\to \infty$. 
\end{proof}

\end{document}